\documentclass[12pt]{amsart}

\usepackage{eucal}
\usepackage{mathrsfs}
\usepackage{MnSymbol}
\usepackage{tikz}
\usepackage{paralist}
\usepackage{caption}
\usepackage{subcaption}
\usepackage{graphicx}
\usepackage{color}

\setlength{\topmargin}{-0.4in}
\setlength{\textheight}{9.0in}     
\setlength{\textwidth}{6.3in}      
\setlength{\oddsidemargin}{.20in}  %
\setlength{\evensidemargin}{.20in} %

\newcommand{\rev}[1]{{#1}}
\DeclareMathOperator{\aarg}{arg}
\newcommand{\uparr}{\tikz\draw[->,line width=0.05cm] (0,0)--(0,0.3);}
\newcommand{\rtarr}{\tikz\draw[->,line width=0.05cm] (0,0)--(0.3,0);}
\newcommand{\bullt}{\tikz \fill (0,0) circle (2.5pt);}
\newcommand{\eint}{e}

\newcommand{\QQQ}{\mathcal{Q}}

\newcommand{\ZZZ}{\mathbb{Z}}

\newcommand{\veps}{\varepsilon}

\newcommand{\vw}{\vec{w}}
\newcommand{\vn}{\vec{n}}
\newcommand{\vl}{\vec{l}\,}
\newcommand{\vj}{\vec{\jmath}}

\newcommand{\vx}{\vec{x}}
\newcommand{\pjt}{\psi_{t,\vj}}
\newcommand{\pjsl}{\psi_{s,\vj+\vl}}

\newcommand{\Dtsl}{D_{t,s,\vl}}
\newcommand{\vk}{\vec{k}}
\newcommand{\vv}{\vec{v}}

\newcommand{\grad}{\vec\nabla}
\def\d{\partial}

\newtheorem{theorem}{Theorem}[section]

\theoremstyle{definition}

\theoremstyle{remark}
\newtheorem{remark}[theorem]{Remark}

\def\d{\partial}
\def\em{\it}

\newcommand{\mysl}[1]{{\textit{\textsf{#1}}}}

\newcommand{\hu}{\hat{u}}
\newcommand{\hphi}{\hat{\phi}}
\newcommand{\ha}{\hat{a}}
\newcommand{\he}{\hat{e}}

\newcommand{\UU}{\mysl{u}}
\newcommand{\UUh}{\UU_{h}}
\newcommand{\VVh}{\VV_{\!h}}
\newcommand{\VV}{\mysl{v}\,}
\newcommand{\WW}{\mysl{w}\,}
\newcommand{\EE}{\mysl{e}\,}
\newcommand{\FF}{\mysl{f}\,}
\newcommand{\QQ}{\mysl{q}\,}
\newcommand{\RR}{\mysl{r}\,}
\newcommand{\ZZ}{\mysl{z}}
\newcommand{\XX}{\mysl{x}}

\newcommand{\hqq}{\mathsf{\hat{\QQ}}}
\newcommand{\hrr}{\mathsf{\hat{\RR}}}
\newcommand{\hw}{\hat w}
\newcommand{\hpsi}{\hat \psi}

\newcommand{\cj}[1]{\overline{{#1}}}

\newcommand{\re}{\mathrm{Re}}
\newcommand{\im}{\mathrm{Im}}

\newcommand{\ii}{\hat{\imath}}

\newcommand{\vuls}{\vu_h^{\mathrm{ls}}}
\newcommand{\phils}{\phi_h^{\mathrm{ls}}}
\newcommand{\vuh}{{\vec{u}_h}}

\newcommand{\tr}{\mathrm{tr}_h}

\newcommand{\Hdiv}[1]{H(\mathrm{div},#1)}

\newcommand{\ip}[1]{ \langle{#1}\rangle}
\newcommand{\iip}[1]{\llangle {#1} \rrangle}   

\newcommand{\og}{\omega}

\newcommand{\om}{\varOmega}
\newcommand{\oh}{{\varOmega_h}}

\newcommand{\vq}{{\vec{q}\,}}
\newcommand{\vr}{{\vec{r}\,}}
\newcommand{\vu}{{\vec{u}\,}}

\newcommand{\ww}{{\vec{w}\,}}
\newcommand{\vb}{{\vec{b}}}

\newcommand{\vtau}{{\vec{\tau}}}

\newcommand{\dive}{\vec{\nabla}\cdot}

\newcommand{\CCC}{\mathbb{C}}

\title[Dispersion analysis for DPG methods] {Dispersive and
  dissipative errors in the DPG method with scaled norms for 
  Helmholtz equation}

\author[Gopalakrishnan]{J. Gopalakrishnan}
\address{Portland State University, PO Box 751, Portland, OR 97207-0751, USA.}
\email{gjay@pdx.edu}

\author[Muga]{I. Muga}
\address{Pontificia Universidad Cat{\'o}lica de Valpara{\'i}so, Casilla 4059, Valpara{\'i}so, Chile.}
\email{ignacio.muga@ucv.cl}

\author[Olivares]{N. Olivares}
\address{Portland State University, PO Box 751, Portland, OR 97207-0751, USA.}
\email{nmo@pdx.edu}

\dedicatory{This paper is dedicated to Leszek Demkowicz on the occasion of his 60${\,}^\mathrm{th}$ birthday.}

\thanks{This work was partially supported by the NSF under grant
  DMS-1211635, by the AFOSR under grant FA9550-12-1-0484, and by the
  FONDECYT project 1110272.}

\keywords{least-squares, dispersion, dissipation,
  quasioptimality, resonance, stencil}

\subjclass[2010]{65N30,35J05}

\begin{document}

\maketitle

\begin{abstract}
  We consider the discontinuous Petrov-Galerkin (DPG) method, where
  the test space is normed by a modified graph norm. The modification
  scales one of the terms in the graph norm by an arbitrary positive
  scaling parameter. Studying the application of the method to the
  Helmholtz equation, we find that better results are obtained, under
  some circumstances, as the scaling parameter approaches a limiting
  value. We perform a dispersion analysis on the multiple interacting
  stencils that form the DPG method. The analysis shows that the
  discrete wavenumbers of the method are complex, explaining the
  numerically observed artificial dissipation in the computed wave
  approximations. Since the DPG method is a nonstandard least-squares
  Galerkin method, we compare its performance with a standard
  least-squares method.
\end{abstract}

\section{Introduction}   \label{sec:introduction}

Discontinuous Petrov-Galerkin (DPG) methods were introduced
in~\cite{DemkoGopal10a,DemkoGopal11}.  The DPG methods minimize a
residual norm, so they belong to the class of least-squares Galerkin
methods~\cite{BocheGunzb09a,CaiLazarMante94,FixGunzb80}, although the
functional setting in DPG methods is nonstandard. In this paper, we
introduce an arbitrary parameter $\veps>0$ into the definition of the
norm in which the residual is minimized. We study the properties of
the resulting family of DPG methods when applied to the Helmholtz
equation.

The DPG framework has already been applied to the Helmholtz equation
in~\cite{DemkoGopalMuga11a}. An error analysis with optimal error
estimates was presented there. There are two major differences in the
content of this paper and~\cite{DemkoGopalMuga11a}. The first is the
introduction of the above mentioned parameter, $\veps$. When
$\veps=1$, the method here reduces to that
in~\cite{DemkoGopalMuga11a}. The use of such scaling parameters was
already advocated in~\cite{DemkoGopalNiemi12} based on numerical
experience. In this paper, we shall provide a theoretical basis for
its use.  The second major difference with~\cite{DemkoGopalMuga11a} is
that in this contribution we perform a dispersion analysis of the DPG
method with the~$\veps$ scaling. We thus discover several important
properties of the method as $\veps$ is varied.

Least-squares Galerkin methods are popular methods in scientific
computation~\cite{BocheGunzb09a,Jiang98}.  They yield Hermitian
positive definite systems, notwithstanding the indefiniteness of the
underlying problem. Hence they are attractive from the point of view
of solver design and many works have focused on this
subject~\cite{Lee99,LeeManteMcCor00}. However, as we shall shortly see
in detail, for wave propagation problems, they yield solutions with
heavy artificial dissipation.  Since the DPG method is of the
least-squares type, it also suffers from this problem. One of the
goals of this paper is to show that by means of the $\veps$-scaling,
we can rectify this problem to some extent.

To explain this issue further, let us fix the specific boundary value
problem we shall consider. Let $A: \Hdiv \om \times H^1(\om) \to 
L^2(\om)^N \times L^2(\om)$ denote the Helmholtz wave operator defined
by
\begin{equation}
  \label{eq:A}
  A ( \vv, \eta) = ( \ii \og \vv + \grad \eta, \ii \og \eta + \dive \vv).
\end{equation}
Here $\ii$ denotes the imaginary unit, $\og$ is the wavenumber, and
$\om$ is a bounded open connected domain with Lipschitz boundary.  All
function spaces in this paper are over the complex field $\CCC$.  The
Helmholtz equation takes the form $ A (\vu,\phi) = \FF, $ for some
$\FF\in L^2(\om)^N \times L^2(\om)$. Although, we consider a general
$\FF$ in this paper, in typical applications, $\FF = (\vec 0, f)$ with
$f\in L^2(\om)$, in which case, eliminating the vector component
$\vu$, we recover the usual second order form of the Helmholtz
equation,
\[
-\Delta \phi -\og^2 \phi  = \ii \og f, \qquad \text{ on } \om.
\]
This must be supplemented with boundary conditions.  The DPG method
for the case of the impedance boundary conditions $ \ii\og\phi + \d
\phi/\d n= 0 $ on $\d\om$ was discussed in~\cite{DemkoGopalMuga11a},
but other boundary conditions are equally well admissible.  In the
present work, we consider the Dirichlet boundary condition
\begin{equation}  
  \label{eq:2ndorder-bc}
  \phi  = 0,  \quad \text{ on } \d\om.
\end{equation}
To deal with this boundary condition, we will need the space
\begin{equation}    
  \label{eq:R}
  R = \Hdiv\om \times H_0^1(\om),
\end{equation} 
Thus, our boundary value problem reads as follows: 
\begin{equation}
  \label{eq:bvp-intro}
  \text{Find } (\vu,\phi) \in R \text{ satisfying } A(\vu,\phi) = \FF.
\end{equation}
It is well known~\cite{Ihlen98} that except for $\omega$ in
$\Sigma$, an isolated countable set of real values, this problem has a unique
solution. We assume henceforth that $\omega$ is not in $\Sigma$.

Before studying the DPG method for~\eqref{eq:bvp-intro}, it is instructive
to examine the simpler $L^2$ least-squares Galerkin method.  Set $R_h
\subset R$ to the Cartesian product of the lowest order Raviart-Thomas
and Lagrange spaces, together with the boundary condition in $R$.  The
method finds $(\vuls,\phils) \in R_h$ such that
\begin{equation}
  \label{eq:ls}
  (\vuls,\phils)=\aarg\min_{\WW\in R_h} \|  \FF -  A \WW \|.
\end{equation}
Throughout, $\| \cdot \|$ denotes the $L^2(\om)$ norm, or the natural
norm in the Cartesian product of several $L^2(\om)$ component spaces.
The method~\eqref{eq:ls} belongs to the so-called
FOSLS~\cite{CaiLazarMante94} class of methods.

Although~\eqref{eq:ls} appears at first sight to be a reasonable
method, computations yield solutions with artificial
dissipation. For example, suppose we use~\eqref{eq:ls}, appropriately
modified to include nonhomogeneous boundary conditions, to approximate
a plane wave propagating at angle $\theta=\pi/8$ in the unit square. A
comparison between the real parts of the exact solution (in
Figure~\ref{fig:dissip-exact}) and the computed solution (in
Figure~\ref{fig:dissip-ls}) shows that the computed solution
dissipates at interior mesh points. The same behavior
is observed for the lowest order DPG method with $\veps=1$ in
Figure~\ref{fig:dissip-dpg1} (see \S\ref{ssec:inexact} for the
definition of $r$ therein and Section~\ref{sec:stencil} for a full
discussion of the lowest order DPG method). The same method with
$\veps=10^{-6}$ however gave a solution (in
Figure~\ref{fig:dissip-dpg1e-6}) that is visually indistinguishable
from the exact solution.  Note that, for the DPG method with
$\veps=1$, the numerical results presented in~\cite{DemkoGopalMuga11a}
show much better performance, because slightly higher order spaces were
used there. Instead, in this paper, we have chosen to study the DPG
method with the lowest possible order of approximation spaces to
reveal the essential difficulties with minimal computational effort.

\begin{figure}   
  \centering     
  \begin{subfigure}{0.45\textwidth}
    \begin{center}
      \includegraphics[width=\textwidth]{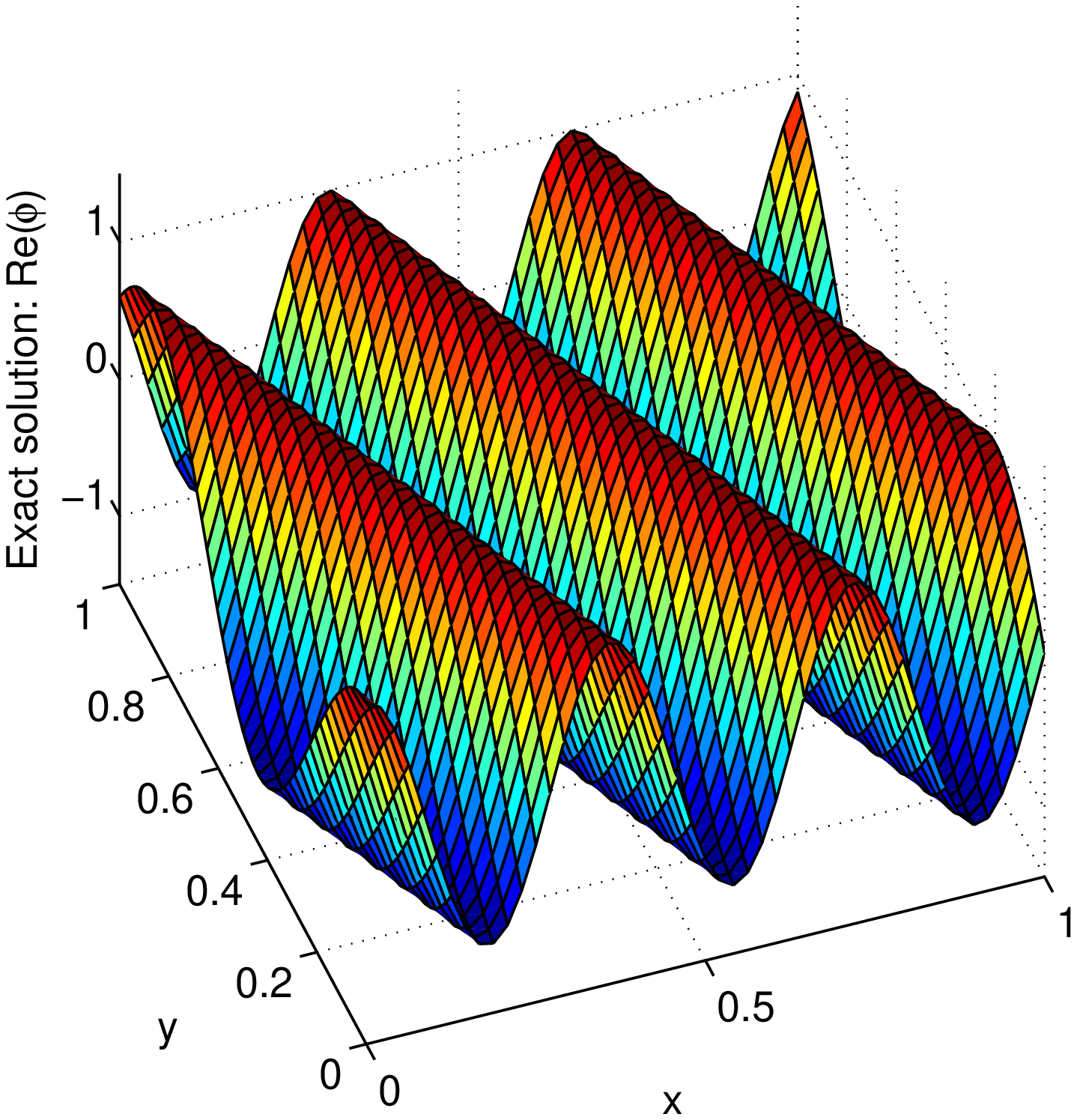}
      \caption{A plane wave propagating at angle $\pi/8$}
      \label{fig:dissip-exact}
    \end{center}
  \end{subfigure}\quad
  \begin{subfigure}{0.45\textwidth}
    \begin{center}
      \includegraphics[width=\textwidth]{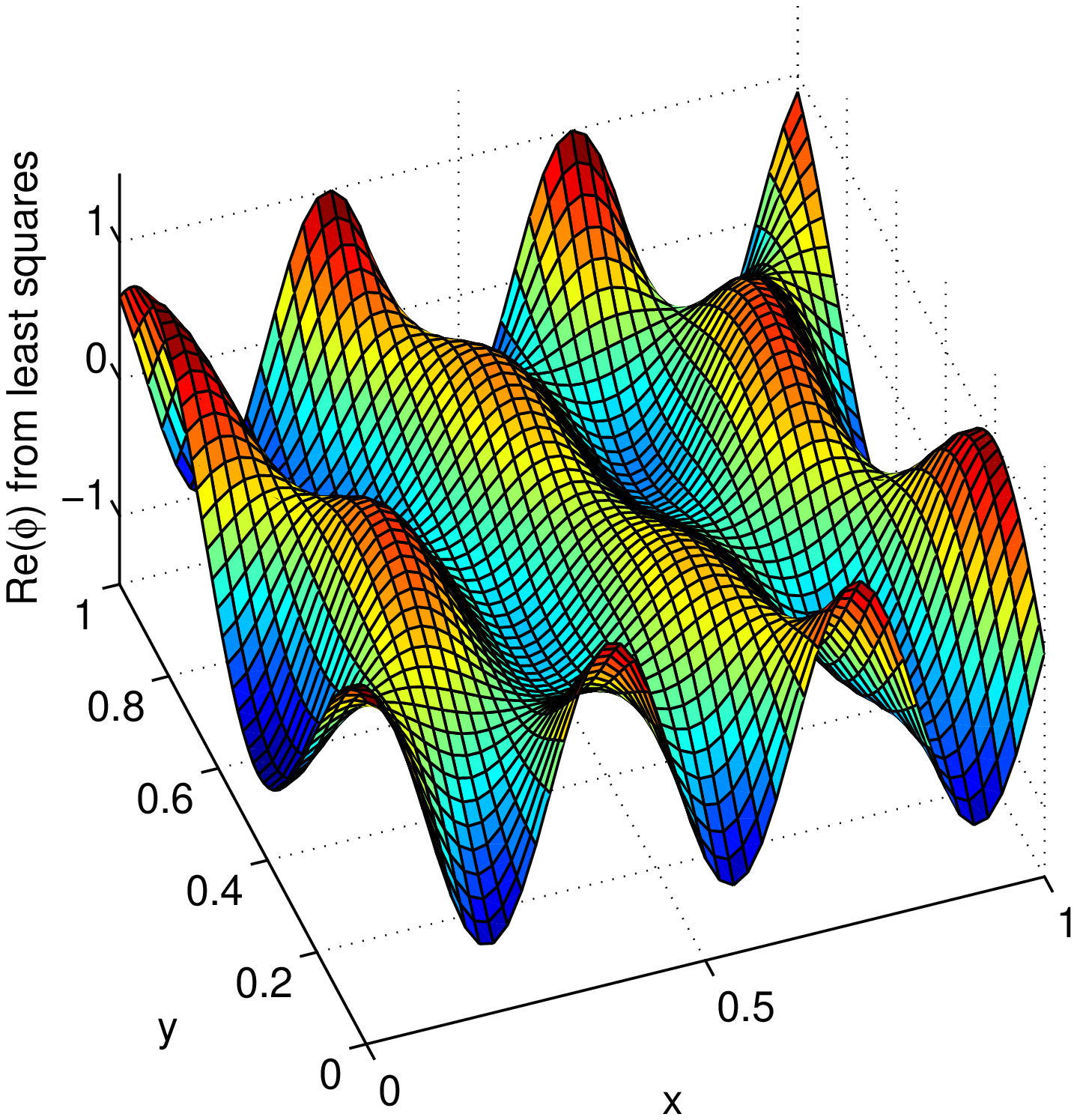}
      \caption{$L^2$ least-squares solution}
      \label{fig:dissip-ls}
    \end{center}
  \end{subfigure}
  \begin{subfigure}{0.45\textwidth}
    \begin{center}
      \includegraphics[width=\textwidth]{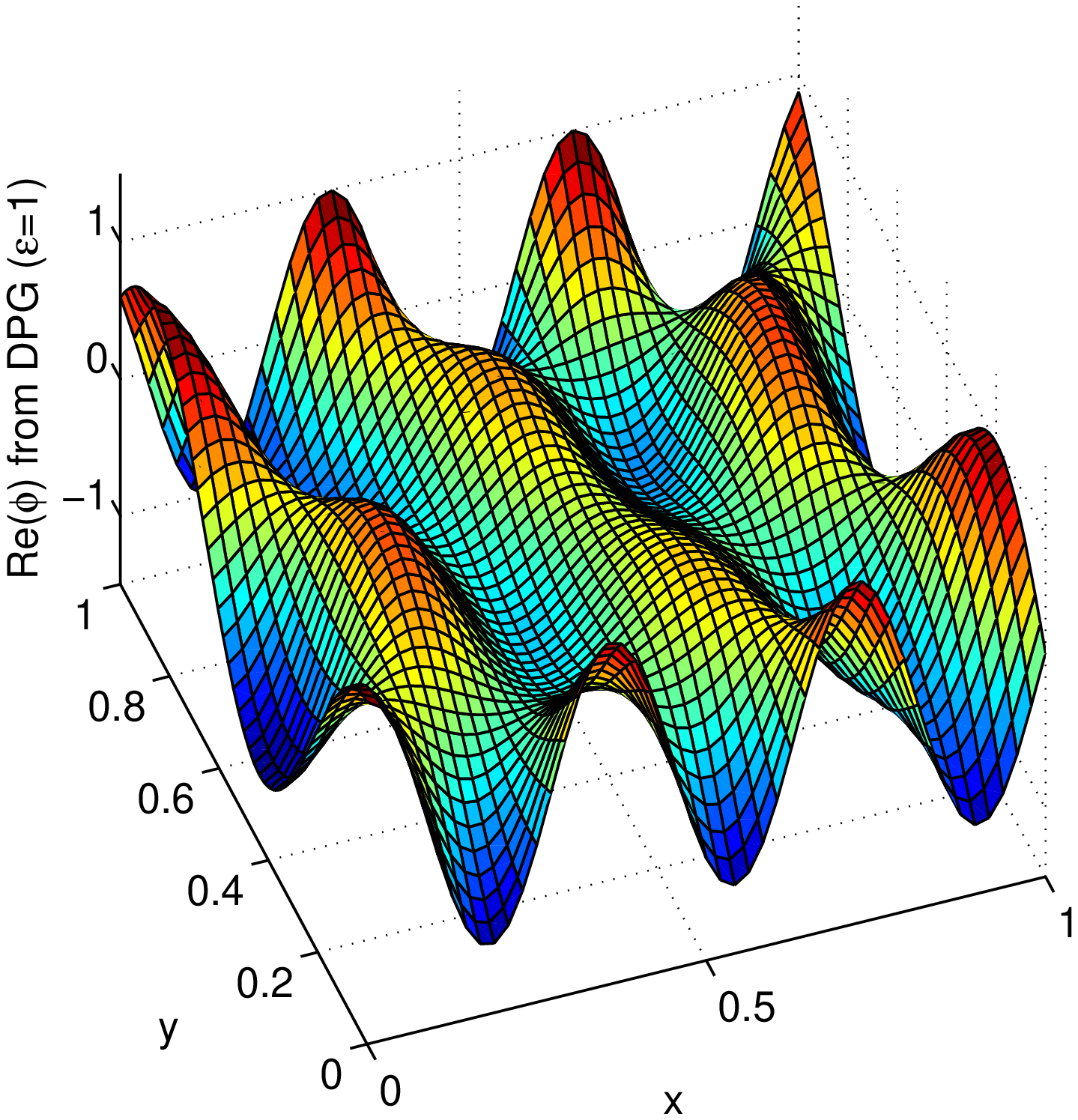}
      \caption{Numerical traces from the lowest order DPG method with
        $\veps=1$ and $r=3$}
      \label{fig:dissip-dpg1}
    \end{center}
  \end{subfigure}\quad
  \begin{subfigure}{0.45\textwidth}
    \begin{center}
      \includegraphics[width=\textwidth]{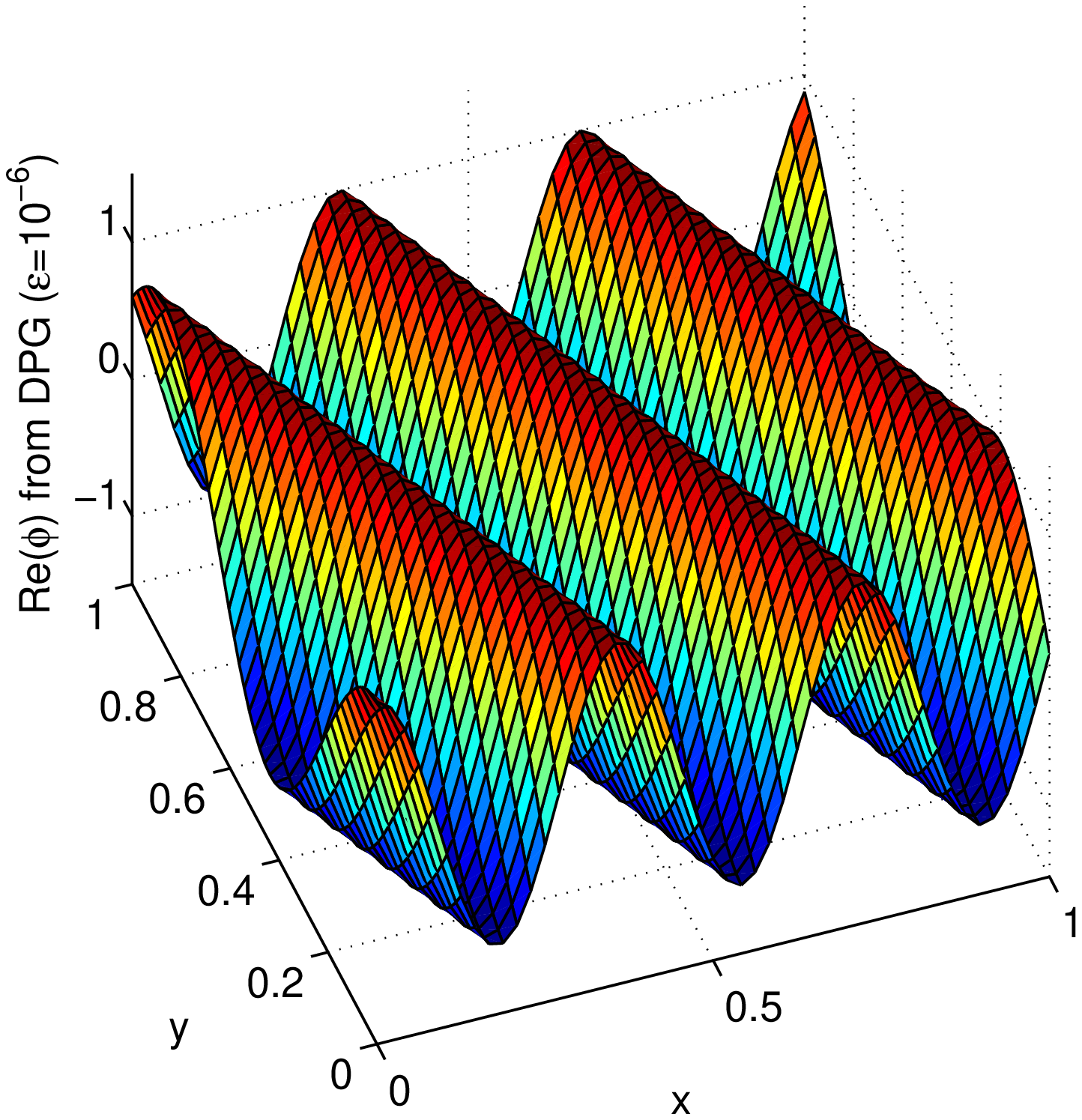}
      \caption{Numerical traces from the lowest order DPG method with
        $\veps=10^{-6}$ and $r=3$}
      \label{fig:dissip-dpg1e-6}
    \end{center}
  \end{subfigure}
  \caption{Approximations to a plane wave computed using a uniform
    mesh of square elements of size $h=1/48$ (about sixteen elements per
    wavelength). Artificial dissipation is visible
    in Figures~\ref{fig:dissip-ls} and~\ref{fig:dissip-dpg1}.}
  \label{fig:dissip}
\end{figure}

The situation in Figures~\ref{fig:dissip-ls} and~\ref{fig:dissip-dpg1}
improves when more elements per wavelength are used. This is not
surprising in view of the asymptotic error estimates of the
methods. To give an example of such an error estimate, consider the
case of the impedance boundary conditions considered
in~\cite{DemkoGopalMuga11a}. It is proven there that there is a
constant $C>0$, independent of $\og$ and mesh size $h$, such that the
lowest order DPG solution $(\vu_h,\phi_h)$ satisfies
\begin{equation}
  \label{eq:16}
\| \vu -\vu_h \| + \| \phi - \phi_h \| \le C \og^2 h  
\end{equation}
for a plane wave solution.  A critical ingredient in this analysis is
the estimate
\begin{equation}
  \label{eq:15}
  \| \WW \| \le C' \| A \WW \|, 
\end{equation}
which, as shown in~\cite[Lemmas~4.2 and 4.3]{DemkoGopalMuga11a}, holds
for all $\WW$ in the analogue of $R$ with impedance boundary
conditions. Although the analysis in~\cite{DemkoGopalMuga11a} was for
the impedance boundary condition, similar techniques apply to the
Dirichlet boundary condition as well, leading to~\eqref{eq:16}.  As more elements per wavelength are used,
$\og h$ decreases, so~\eqref{eq:16} guarantees that the situation in
Figure~\ref{fig:dissip-dpg1} will improve.

The analysis for the $L^2$ least-squares method is easier than the
above-mentioned DPG analysis. Indeed, by~\eqref{eq:ls}, $ \| \FF -
A(\vuls,\phils) \| \le \| A ( \vu - \vw_h,\phi-\psi_h)\|$ for any
$(\vw_h, \psi_h) \in R_h$. Hence, applying~\eqref{eq:15} to the error
$\EE = (\vu - \vuls, \phi - \phils)$ and noting that the residual is
$A \EE = \FF - A(\vuls,\phils)$, we obtain $\| \EE \| \le C' \| A (
\vu - \vw_h,\phi-\psi_h)\|$. By standard approximation estimates, we 
then conclude that there is a $C>0$ independent of $\og$ and $h$ such
that
\begin{equation}
  \label{eq:17}
  \| \vu -\vuls \| + \| \phi - \phils \| \le C \og^2 h.  
\end{equation}
This simple technique of analysis of $L^2$-based least-squares methods
is well-known (see~e.g., \cite[pp.~70--71]{Jiang98}). As
with~\eqref{eq:16}, the estimate~\eqref{eq:17} implies that as the
number of elements per wavelength is increased, $\og h$ decreases, and
the situation in Figure~\ref{fig:dissip-ls} must improve.

Yet, Figures~\ref{fig:dissip-ls} and~\ref{fig:dissip-dpg1} show that
these methods fail to be competitive with standard methods in accuracy
for small number of elements per wavelength. The figures also
illustrate one of the difficulties with asymptotic error estimates
like~\eqref{eq:15} and~\eqref{eq:17}. Having little knowledge of the
size of $C$, we cannot predict the performance of the method on coarse
meshes. Motivated by this difficulty, one of the theorems we present
(Theorem~\ref{thm:eps}) will give a better idea of the constant
involved as $\veps \to 0$. Also note that the above indicated error
analyses does not give us a quantitative measure of differences in
wave speeds between the computed and exact waves.  This motivates the
dispersion analysis we present in this paper, which will address the
issue of wave speed discrepancies.

We should note that there are alternative methods of the least-squares
type that exhibit better performance than the standard $L^2$-based
least squares method. Some are based on adding further terms to the
residual to be minimized (e.g., to control the curl of the vector
equation~\cite{Lee99}).  Another avenue explored by others, and closer
to the subject of this paper, is the idea of minimizing the residual
in a dual norm~\cite{BrambKolevPasci05,BrambLazarPasci97}.  The main
difference with our method is that our dual norms are locally
computable in contrast to their nonlocal norms. This is achieved by
using an ultraweak variational setting. The domain and codomain of the
operator in the least-squares minimization associated to the DPG
method are nonstandard, as we shall see next.

\section{The DPG method for the Helmholtz equation}
\label{sec:dpg}

In this section, we briefly review the method for the Helmholtz
equation introduced in~\cite{DemkoGopalMuga11a}. We then show exactly
where the parameter $\veps$ is introduced to get the variant of the
method that we intend to study.

Let $\oh$ be a disjoint partitioning of $\om$ into open elements $K$
such that $\overline{\om}=\cup_{K\in\oh}\overline{K}$. The shape of
the mesh elements in $\oh$ is unimportant for now, except that we
require their boundaries $\d K$ to be Lipschitz so that traces make
sense. Let
\begin{equation}
  \label{eq:V}
  V = \Hdiv \oh \times H^1(\oh),
\end{equation}
where
\begin{align*}
 \Hdiv\oh
 & = \{ \vtau: \; \vtau|_K \in \Hdiv {K},
 \;\forall K \in \oh\},
 \\ \nonumber 
 H^1(\oh)
 & = \{ v: \; v|_K \in H^1(K), \;\forall K \in \oh\}.
\end{align*}
Let $A_h: V\to L^2(\om)^N \times L^2(\om)$ be defined in the same
way as $A$ in~\eqref{eq:A}, except the derivatives are taken element
by element, i.e., on each $K\in \oh$, we have $A_h ( \vv, \eta)|_K = (
\ii \og \vv|_K + \grad \eta|_K, \ii \og \eta|_K + \dive \vv|_K).$

\subsection{Integration by parts}

The following basic formula that we shall use is obtained simply by
integrating by parts each of the derivatives involved:
\begin{equation}
  \label{eq:1}
  \int_D A (\ww, \psi) \cdot \cj{(\vv,\eta)}
  = 
  -\int_D (\ww, \psi) \cdot \cj{A  (\vv,\eta)}
  + \int_{\d D} (\ww \cdot \vn) \,\cj{\eta} 
  + \int_{\d D} \psi \,\cj{{(\vv\cdot\vn)}}, 
\end{equation}
for smooth functions $(\ww, \psi)$ and $ (\vv,\eta)$ and domains $D$
with Lipschitz boundary. Above, overlines denote complex conjugations
and the integrals use the appropriate Lebesgue measure.  Note that we
use the notation $\vn$ throughout to generically denote the outward
unit normal on various domains -- the specific domain will be clear
from context -- e.g., in~\eqref{eq:1}, it is $D$.  Introducing the
following abbreviated notations for tuples $\WW = (\ww,\psi)$ and
$\VV=(\vv, \eta)$,
\begin{align*}
\ip{\WW, \VV}_h 
& = \sum_{K\in\oh}  \int_K \ww\cdot \cj{\vv} + \psi \,\cj{\eta},
\\
\iip{ \WW, \VV }_h
& =  
\sum_{K\in\oh}  
\int_{\d K} {(\ww \cdot \vn)} \,\cj{\eta} 
+ 
\int_{\d K} \psi \,\cj{{(\vv\cdot\vn)}}\,,  
\end{align*}
we can rewrite~\eqref{eq:1}, applied element by element, as
\begin{equation}
  \label{eq:2}
  \ip{ A \WW, \VV }_h = -\ip{\WW, A_h \VV}_h + \iip{ \WW, \VV}_h.
\end{equation}
By density, \eqref{eq:2} holds for all $\WW \in \Hdiv\om \times
H^1(\om)$ and all $\VV \in V$.  \rev{Then, $\iip{\cdot,\cdot}_h$ must
  be interpreted using the appropriate duality pairing as the last term
  in~\eqref{eq:2} contains interelement traces on $\d\oh = \{
  \d K: K \in \om_h\}$.}

It will be convenient to introduce notation
for such traces: Define
\[
\tr : \Hdiv\om \times H^1(\Omega) \to 
\prod_K H^{-1/2}(\d K)\vn \times H^{1/2}(\d K)
\]
as follows. For any $( \ww,\psi) \in \Hdiv\om \times H^1(\Omega)$, the
restriction of $\tr( \ww,\psi)$ on the boundary of any mesh element
$\d K$ takes the form $((\ww\cdot \vn)\vn|_{\d K}, \psi|_{\d K}) \in
H^{-1/2}(\d K)\vn \times H^{1/2}(\d K).$ \rev{Although the
  meaning of $H^{-1/2}(\d K)\vn$ is more or less self-evident, to
  include a proper definition, first let $Z$ denote the space of all
  functions of the form $\xi \vn$ where $\xi$ is in $H^{1/2}(\d K)$,
  normed by $\| \xi \vn \|_Z = \| \xi \|_{H^{1/2}(\d K)}.$ Let $Z'$
  denote the dual space of $Z$.  Now, consider the map $M \vq =
  (\vq\cdot\vn) \vn|_{\d K}$, defined for smooth functions $\vq$ on
  $\bar K$.  Since 
  \[
  \int_{\d K} M \vq \cdot \xi \vn = \int_{\d K}(\vq\cdot\vn)
  \xi
  \] 
  (the left and right hand sides extend to duality pairings in $Z$ and
  $H^{1/2}(\d K)$, respectively), the standard trace theory implies
  that $M$ can be extended to a continuous linear operator $M: \Hdiv K
  \to Z'$. The range of $M$ is what we denoted by ``$H^{-1/2}(\d
  K)\vn$.'' Throughout this paper, functions in $H^{-1/2}(\d K)\vn$
  appear together with a dot product with $\vn$, so we could equally
  well consider the standard space $H^{-1/2}(\d K)$, but the notation
  simplifies with the former. In particular, with this notation, $\tr(
  \ww,\psi)$ is a single-valued function on the element interfaces
  since $( \ww,\psi)$ is globally in $\Hdiv\om \times H^1(\Omega)$.}

\subsection{An ultraweak formulation}

The boundary value problem we wish to approximate is~\eqref{eq:bvp}.
Recall the definition of $R$ in~\eqref{eq:R}.  To deal with the
Dirichlet boundary condition, we will need the trace space
\begin{equation}
  \label{eq:Q}
  Q = \tr(R).
\end{equation}
To derive the DPG method for
\begin{subequations}
  \label{eq:bvp}
  \begin{align}
    \label{eq:bvp-1}
    A (\vu,\phi) &  = \FF, && \text{ on } \om, \\
    \phi & =0, && \text{ on } \d \om,
  \end{align}
\end{subequations}
we use the integration parts by formula~\eqref{eq:2} to get 
\[
-\ip{ (\vu,\phi), A_h(\vv,\eta) }_h + \iip{ \tr(\vu,\phi), (\vv,\eta)}_h
= \ip{ \FF, (\vv,\eta) }_h
\]
for all $(\vv,\eta) \in V$. Now we let the trace $\tr(\vu,\phi)$ be an
independent unknown $(\hu,\hphi)$ in $Q$.  Defining the bilinear form
$b( (\vu,\phi,\hu,\hphi),(\vv,\eta)) = -\ip{ (\vu,\phi), A_h(\vv,\eta)
}_h + \iip{ (\hu,\hphi), (\vv,\eta)}_h$, we obtain the ultraweak
formulation of~\cite{DemkoGopalMuga11a}: Find
$\UU=(\vu,\phi,\hu,\hphi)$ in
\[
U= L^2(\om)^N \times L^2(\om) \times Q
\]
satisfying
\begin{equation}
  \label{eq:weakform}
  b(\UU,\VV)  =  \ip{ \FF, \VV}_h,
  \qquad
  \forall \VV \in V.
\end{equation}
The wellposedness of this formulation was proved
in~\cite{DemkoGopalMuga11a} for the case of impedance boundary
conditions. As is customary, we refer to the solution component $\hu$
as the {\em numerical flux} and $\hphi$ as the {\em numerical trace}.

\subsection{The $\veps$-DPG method} 

Let $U_h \subset U$ be a finite dimensional trial space.  The
{\em DPG method} finds $\UUh$ in $U_h$ satisfying
\begin{equation}
  \label{eq:stdDPG-1}
  b(\UUh,\VVh)  =  \ip{ \FF, \VVh}_h,
  \qquad
\end{equation}
for all $\VVh$ in the test space $V_h$, defined by 
\begin{equation}
    \label{eq:stdDPG-2}
  V_h = T U_h,
\end{equation}
where $T: U \to V$ is defined by 
\begin{equation}
  \label{eq:T}
  \ip{T \WW, \VV}_V =  b ( \WW,\VV),
\qquad \forall \VV \in V,
\end{equation}
and the $V$-inner product $\ip{\cdot,\cdot}_V$ is the inner product
generated by the norm
\begin{equation}
  \label{eq:Vnorm}
  \| \VV \|_V^2 = \| A_h \VV\|^2 + \veps^2\| \VV \|^2.
\end{equation}
Here $\veps > 0$ is an arbitrary scaling parameter.  Note that when
$\veps=1$,~\eqref{eq:Vnorm} defines a {\em graph norm} on $V$.  The
case $\veps=1$, analyzed in~\cite{DemkoGopalMuga11a}, is the standard
DPG method. In the next section, we will adapt the analysis
of~\cite{DemkoGopalMuga11a} to the case of the variable $\veps$, which
we refer to as the ``$\veps$-DPG method.''

It is easy to reformulate the $\veps$-DPG method as a residual
minimization problem. (All DPG methods with test spaces as
in~\eqref{eq:T} minimize a residual as already pointed out
in~\cite{DemkoGopal11}.) Letting $V'$ denote the dual space of $V$,
normed with $\| \cdot\|_{V'}$, we define $F\in V'$ by $F(\VV) =
\ip{\FF, \VV}_h$. Then letting $B: U \to V'$ denote the operator
generated by the above-defined $b(\cdot,\cdot)$, i.e., $B \WW (\VV) =
b(\WW,\VV)$ for all $\WW\in U$ and $\VV \in V$, one can immediately
see that $\UUh$ solves~\eqref{eq:stdDPG-1} if and only if
\[
\UUh = \aarg\min_{w_h \in U_h}  \| B w_h - F \|_{V'}.
\]
This norm highlights the difference between the DPG method and
the previously discussed standard $L^2$-based least-squares
method~\eqref{eq:ls}.

\subsection{Inexactly computed test spaces}   \label{ssec:inexact}

A basis for the test space $V_h$, defined in~\eqref{eq:stdDPG-2}, can
be obtained by applying $T$ to a basis of $U_h$. One application of
$T$ requires solving~\eqref{eq:T}, which although local (calculable
element by element), is still an infinite dimensional
problem. Accordingly a practical version of the $\veps$-DPG method
uses a finite dimensional subspace $V^r \subset V$ and replaces $T$ by
$T^r: U \to V^r$ defined by
\begin{equation}
  \label{eq:Tr}
  \ip{T^r \WW, \VV}_V =  b ( \WW,\VV),
\qquad \forall \VV \in V^r.
\end{equation}
In computations, we then use, in place of $V_h$, the inexactly
computed test space $V_h^r \equiv T^r U_h$, i.e., the practical DPG
method finds $\UUh^r$ in $U_h$ satisfying
\begin{equation}
  \label{eq:stdDPG-practical}
  b(\UUh^r,\VV)  =  \ip{ \FF, \VV}_h, \qquad \forall \VV \in V_h^r.
  \qquad
\end{equation}
For the Helmholtz example, we set $V^r$ as follows: Let $\QQQ_{l,m}$
denote the space of polynomials of degree at most $l$ and $m$ in $x_1$
and $x_2$, resp.  Let $RT_r \equiv \QQQ_{r,r-1} \times \QQQ_{r-1,r}$
denote the Raviart-Thomas subspace of $\Hdiv K$.  We
set 
\[
V^r = \{ v: v|_K \in RT_r \times \rev{\QQQ_{r,r} } \}.
\]
Clearly, $V^r \subseteq \Hdiv\oh \times H^1(\oh)$. Later, we shall
solve~\eqref{eq:stdDPG-practical} using $r\ge 2$ and report the
numerical results. It is easy to see using the Fortin operators
developed in~\cite{GopalQiu12a} that $T^r$ is injective for $r\ge 2$,
which implies that~\eqref{eq:stdDPG-practical} yields a positive
definite system. However, a complete analysis using~\cite{GopalQiu12a}
tracking $\og$ and $r$ dependencies, remains to be developed, and is
not the subject of this paper.

\section{Analysis of the $\veps$-DPG method}
\label{sec:analysis}

The purpose of this section is to study how the stability constant of
the $\veps$-DPG method~\eqref{eq:stdDPG-1} depends on $\veps$.  The
analysis in this section provides the theoretical motivation to
introduce the scaling by $\veps$ into the DPG setting.

\subsection{Assumption}

The analysis is under the already placed assumption that the boundary
value problem~\eqref{eq:bvp} is uniquely solvable.  We will now need a
quantitative form of this assumption. Namely, there is a constant
$C(\og)>0$, possibly depending on $\og$, such that the solution
of~\eqref{eq:bvp} satisfies
\[
    \|(\vu,\phi) \| \le C(\og) \| \FF \|.  
\]
One expects $C(\og)$ to become large as $\og$ approaches any of
the resonances in $\Sigma$.  For any $(\vr,\psi) \in R$, choosing $\FF
= A(\vr,\psi)$ and applying the above inequality, we obtain 
\begin{equation}
  \label{eq:Ck}
  \|(\vr,\psi) \| \le C(\og) \| A(\vr,\psi) \|,
  \qquad \forall (\vr,\psi) \in R.
\end{equation}
This is the form in which we will use the assumption.

Note that in the case of the impedance boundary condition, the unique
solvability assumption can be easily verified~\cite{Melen95} for all
$\og$.  Furthermore, when that boundary condition is imposed, for
instance, on the boundary of a convex domain, the
estimate~\eqref{eq:Ck} is proved in~\cite[Lemmas~4.2 and
4.3]{DemkoGopalMuga11a} using a result of~\cite{Melen95} . The
resulting constant $C(\og)$ is bounded {\em independently of
  $\og$}. However, we cannot expect this independence to hold for the
Dirichlet boundary condition~\eqref{eq:2ndorder-bc} we are presently
considering.

Finally, let us note that the ensuing analysis applies equally well to
the impedance boundary condition: We only need to replace the space
$R$ considered here by that in~\cite{DemkoGopalMuga11a} and
assume~\eqref{eq:Ck} for all functions in the revised~$R$.

\subsection{Quasioptimality}

It is well-known that if there are positive constants $C_1$ and $C_2$
such that
\begin{equation}
  \label{eq:normequiv}
  C_1 \| \VV\|_V \le 
  \sup_{ \WW\in U} \frac{ | b(\WW,\VV)| }{ \| \WW \|_U } 
  \le 
  C_2 \| \VV \|_V,\qquad \forall \VV \in V,
\end{equation}
then a quasioptimal error estimate 
\begin{equation}
  \label{eq:ee}
  \| \UU - \UUh \|_U
  \le \frac{C_2}{C_1} \inf_{\WW \in U_h} \| \UU - \WW \|_U
\end{equation}
holds.  This follows from~\cite[Theorem~2.1]{DemkoGopalMuga11a}, or
from the more general result of~\cite[Theorem~2.1]{GopalQiu12a}, after
noting that the following uniqueness condition holds: Any $\WW\in U$
satisfying $b(\WW, \VV)=0$ for all $\VV\in V$ vanishes. (Since this
uniqueness condition can be proved as
in~\cite[Lemma~4.1]{DemkoGopalMuga11a}, we shall not dwell on it
here.)

Accordingly, the remainder of this section is devoted to
proving~\eqref{eq:normequiv}, tracking the dependence of constants
with $\veps$, and using the $U$-norm we define below. First, let
\begin{align*}
  \| (\vr,\psi) \|_R  & =  \frac{1}{\veps} \| A(\vr,\psi)\|.
\end{align*}
By virtue of~\eqref{eq:Ck}, this is clearly a norm under which the
space $R$, defined in~\eqref{eq:R}, is complete.
The space $Q$ in~\eqref{eq:Q} is normed
by the quotient norm, i.e., for any $\hqq \in Q$, 
\[
\| \hqq \|_Q = \inf\left\{ \| \RR \|_R: \text{ for all }
  \RR\in R \text{ such that }  
 \tr \RR = \hqq \right\}.
\]
The function in $R$ which achieves the infimum above defines an
extension operator $E: Q \to R$ that is a continuous right inverse
of $\tr$ and satisfies 
\begin{equation}
  \label{eq:E}
\| E \hqq \|_R = \|\hqq \|_Q.
\end{equation}
With these notations, we can now define the norm on the trial space
by
\[
\| (w,\psi,\hw,\hpsi)\|_U^2 
= 
\| (w,\psi) \|^2 + \| (\hw,\hpsi) \|_Q^2.
\]
The following theorem is proved by extending the 
ideas in~\cite{DemkoGopalMuga11a} to the $\veps$-DPG method.

\begin{theorem}
  \label{thm:eps}
  Suppose~\eqref{eq:Ck} holds and let $c= C(\og)\left( C(\og)\veps/2 +
    \sqrt{ 1 + C(\og)^2 \veps^2/4} \right)$. Then the inf-sup
  condition in~\eqref{eq:normequiv} holds with
  $C_1=1/\sqrt{1+c\,\veps}$ and the continuity condition
  in~\eqref{eq:normequiv} holds with $C_2 = \sqrt{1+c\,\veps}$. Hence,
  the DPG solution admits the error estimate
  \[
  \| \UU - \UUh \|_U
  \le \left(   1+ c\,\veps \right)
  \inf_{\WW \in U_h} \| \UU - \WW \|_U.
  \]
\end{theorem}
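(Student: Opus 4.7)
\emph{Plan of Proof.} My plan is to verify the inf-sup and continuity inequalities in~\eqref{eq:normequiv} in parallel, reducing both to the same sharp scalar quadratic estimate in $C(\og)\veps$; the stated quasioptimality bound then follows immediately from~\eqref{eq:ee}.

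For the inf-sup direction I would fix $\VV \in V$ and use the unique solvability of~\eqref{eq:bvp} to produce $(\vec{r}_0, s_0) \in R$ with $A(\vec{r}_0, s_0) = \VV$, so that $\|(\vec{r}_0, s_0)\| \le C(\og)\|\VV\|$ by~\eqref{eq:Ck}. As trial candidate I would take
\[
\WW \;=\; \bigl(-A_h\VV + \veps^2(\vec{r}_0, s_0),\; \veps^2\,\tr(\vec{r}_0, s_0)\bigr) \in U.
\]
Expanding $b(\WW,\VV)$ and applying the integration by parts identity~\eqref{eq:2} to $(\vec{r}_0, s_0) \in R$ tested against $\VV$ (which converts $\iip{\tr(\vec{r}_0, s_0),\VV}_h$ into $\ip{A(\vec{r}_0, s_0),\VV}_h + \ip{(\vec{r}_0, s_0),A_h\VV}_h = \ip{\VV,\VV}_h + \ip{(\vec{r}_0,s_0),A_h\VV}_h$) makes all cross terms cancel, yielding the clean identity $b(\WW,\VV) = \|A_h\VV\|^2 + \veps^2\|\VV\|^2 = \|\VV\|_V^2$. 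Then $\|\WW\|_U$ is bounded from above using the triangle inequality, $\|(\vec{r}_0, s_0)\| \le C(\og)\|\VV\|$, and $\|\tr(\vec{r}_0,s_0)\|_Q \le \|(\vec{r}_0,s_0)\|_R = \veps^{-1}\|\VV\|$, leading to $\|\WW\|_U^2 \le (\|A_h\VV\| + C(\og)\veps^2\|\VV\|)^2 + \veps^2\|\VV\|^2$.

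For continuity, taking arbitrary $\WW = (w,\psi,\hw,\hpsi) \in U$ and $\VV \in V$, the volume part of $b(\WW,\VV)$ is controlled by $\|(w,\psi)\|\,\|A_h\VV\|$, and I would treat the trace part by inserting the extension $E(\hw,\hpsi) \in R$ normalized by~\eqref{eq:E} and applying~\eqref{eq:2} once more to rewrite
\[
\iip{(\hw,\hpsi),\VV}_h \;=\; \ip{AE(\hw,\hpsi),\VV}_h + \ip{E(\hw,\hpsi),A_h\VV}_h.
\]
These two terms are bounded using $\|AE(\hw,\hpsi)\| = \veps\,\|(\hw,\hpsi)\|_Q$ (from the definition of $\|\cdot\|_R$ and~\eqref{eq:E}) together with $\|E(\hw,\hpsi)\| \le C(\og)\veps\,\|(\hw,\hpsi)\|_Q$ (from~\eqref{eq:Ck} applied to $E(\hw,\hpsi)$).

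Both directions now reduce to the same scalar exercise in $a = \|A_h\VV\|$, $b = \veps\|\VV\|$ (so $\|\VV\|_V^2 = a^2 + b^2$): determine the smallest $\kappa \ge 1$ for which
\[
a^2 + 2C(\og)\veps\, ab + \bigl(1 + C(\og)^2\veps^2\bigr) b^2 \;\le\; \kappa\,(a^2 + b^2)
\]
holds for all $a,b \ge 0$. Rewriting this as the nonnegative-definiteness of a quadratic form in $(a,b)$ and requiring nonpositive discriminant gives $(\kappa-1)^2 - C(\og)^2\veps^2(\kappa-1) - C(\og)^2\veps^2 \ge 0$, whose smallest admissible root $\kappa - 1$ is exactly the product $c\veps$ appearing in the statement. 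Combining this scalar bound with the identity $b(\WW,\VV) = \|\VV\|_V^2$ from the inf-sup construction, and with Cauchy--Schwarz applied to $(\|(w,\psi)\|, \|(\hw,\hpsi)\|_Q)\cdot(a, b + C(\og)\veps a)$ in the continuity direction, delivers $C_1 = 1/\sqrt{1 + c\veps}$ and $C_2 = \sqrt{1 + c\veps}$; the error estimate then follows from~\eqref{eq:ee}. The main care point during execution is extracting the sharp constant $c$: routing the cross term $2C(\og)\veps\, ab$ through the discriminant (rather than absorbing it by a Young-type splitting, which would yield only the weaker bound $C_2/C_1 \le 1 + C(\og)\veps$) is what makes the stated $c$ appear.
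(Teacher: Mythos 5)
Your proposal is correct and follows essentially the same route as the paper: the inf-sup candidate $\WW$ is exactly $\veps^2$ times the paper's pair $(\ZZ,\hrr)$ with $\ZZ=\RR-\veps^{-2}A_h\VV$, and the continuity argument uses the identical decomposition through the extension operator $E$ with the bounds $\|AE\hqq\|=\veps\|\hqq\|_Q$ and $\|E\hqq\|\le C(\og)\veps\|\hqq\|_Q$. Your discriminant computation for the sharp $\kappa$ is algebraically the same as the paper's optimized Young inequality (its choice $\alpha^{-2}=c\veps$ solves precisely your quadratic $(\kappa-1)^2-C(\og)^2\veps^2(\kappa-1)-C(\og)^2\veps^2=0$), so the two extractions of $c$ coincide.
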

\begin{proof}
  We first prove the continuity estimate.  Let $(\WW,\hqq) \in U$ and
  let $\VV\in V$. We use the abbreviated notations $\hqq=(\hw,\hpsi)$,
  $\WW =(w,\psi)$, and $\VV = (\vv,\eta)$.  By~\eqref{eq:Ck}
  and~\eqref{eq:E},
  \begin{equation}
    \label{eq:5}
    \| E \hqq \|\le C(\og) \veps  \| \hqq \|_Q,
    \qquad
    \| AE \hqq \|= \veps  \| \hqq \|_Q.
  \end{equation}
  The extension $E$ can be used to rewrite $b( (\WW,\hqq),
  \VV ) = -\ip{ \WW, A_h\VV }_h + \ip{
    E\hqq, A_h\VV }_h + \ip{ AE\hqq, \VV
  }_h.$ Then, applying the Cauchy-Schwarz inequality, and
  using~\eqref{eq:5}, we have
  \begin{align}
    \nonumber
    | b( (\WW,\hqq), \VV ) | 
    & \le 
    \| \WW \| \| A_h\VV \| 
    + C(\og)\veps \|\hqq \|_Q \| A_h\VV \|
    + \veps \|\hqq\|_Q  \|\VV\|
    \\ \label{eq:12}
    & \le
    \left(
      \| \WW \|^2 + \| \hqq \|_Q^2 
    \right)^{1/2} t,
  \end{align}
  where $ t^2 = \| A_h\VV\|^2 + \left( C(\og) \veps \| A_h
    \VV \| + \veps \| \VV \| \right)^2$.  With $a=C(\og)
  \veps \| A_h \VV \| $ and $b= \veps \| \VV \|$ we
  apply the inequality $(a+ b)^2 \le (1+\alpha^2) a^2 +
  (1+\alpha^{-2}) b^2$ to obtain
  \begin{align*}
    t^2
    & \le 
    \left( 1 + (1+\alpha^2)  C(\og)^2 \veps^2\right) 
    \| A_h \VV \|^2 + (1+\alpha^{-2}) \veps^2 
    \| \VV \|^2,
  \end{align*}
  for any $\alpha>0$. Setting $\alpha^2= -1/2 + \sqrt{1/4 +
    C(\og)^{-2}\veps^{-2}},$ so that 
  \begin{equation}
    \label{eq:alpha}
    (1+\alpha^2)C(\og)^2 \veps^2 = \alpha^{-2} = c\,\veps
  \end{equation}
  with $c$ as in the statement of the theorem.  Hence, $ t^2 \le
  (1+c\, \veps) \| \VV \|_V^2.  $ Returning to~\eqref{eq:12}, 
  \[
  | b( (\WW,\hqq), \VV ) | 
  \le C_2
  \| (\WW,\hqq) \|_U \| \VV \|_V.
  \]
  with $C_2 =\sqrt{1+c\,\veps}$.  This verifies the upper inequality
  of~\eqref{eq:normequiv}.

  To prove the lower inequality of~\eqref{eq:normequiv}, let $\RR$ be
  the unique function in $R$ satisfying $A\RR = \VV$ for any given
  $\VV \in V$.  Then, by~\eqref{eq:Ck},
  \begin{equation}
    \label{eq:4}
      \| \RR \| \le C(\og) \|\VV \|.
  \end{equation}
  Also, since $\| A\RR \| = \|\VV \|$, 
  letting $\hrr = \tr\RR$, we have 
  \begin{equation}
    \label{eq:7}
      \| \hrr \|_Q = 
      \frac 1 \veps \| A E\hrr \| \le \frac 1 \veps \| A\RR\|
      = \frac 1 \veps \|\VV\|.
  \end{equation}
  By~\eqref{eq:2},  we have $ \ip{ A\RR, \VV }_h = -\ip{\RR, A_h\VV }_h +
  \iip{\hrr, \VV }_h$, so 
  \begin{equation}
    \label{eq:11}
    \| \VV\|_V^2 
    = 
    \veps^2 \| \VV \|^2 + \| A_h\VV \|^2
    =    \veps^2\,  b( (\ZZ,\hrr), \VV),
  \end{equation}
  where $\ZZ = \RR - \veps^{-2} A_h\VV$, a function that can be bounded
  using~\eqref{eq:4}, as follows:
  \begin{align*}
    \| \ZZ \|^2 
    & \le 
    (1+\alpha^2) \| \RR \|^2 
    + (1 + \alpha^{-2} ) \veps^{-4} \| A_h\VV\|^2
    \\
    & \le 
     (1+\alpha^2)C(\og)^2 \| \VV \|^2
    + (1 + \alpha^{-2} ) \veps^{-4} \| A_h\VV\|^2,
  \end{align*}
  for any $\alpha>0$. Choosing $\alpha$ as in~\eqref{eq:alpha} and
  using~\eqref{eq:4}--\eqref{eq:7},
  \begin{align}
    \nonumber
    \veps^4\| (\ZZ,\hrr)  \|_U^2 
    & = \veps^4\| \ZZ\|^2 +\veps^4 \| \hrr\|_Q^2 
    \\ \nonumber
    & 
    \le  
    \left( 1+ (1+\alpha^2)C(\og)^2\veps^2\right)\veps^2 \| \VV \|^2
    + (1 + \alpha^{-2} ) \| A_h\VV\|^2
    \\ \label{eq:13}
    & 
    \le (1+c\, \veps)
    \left( \veps^2 \|\VV \|^2 + \| A_h\VV \|^2\right).
  \end{align}
  Returning to~\eqref{eq:11}, we now have
  \begin{align*}
     \| \VV\|_V^2 
     &
     = 
     \frac{b( (\ZZ,\hrr), \VV) }
     {\|  (\ZZ,\hrr) \|_U}  \,\veps^2\,  \| (\ZZ,\hrr) \|_U
     \le \left( \sup_{\XX\in U} \frac{ |b( \XX,\VV)| }{\|\XX\|_U}
     \right) \sqrt{ 1+c\, \veps}\, \| \VV\|_V
  \end{align*}
  by virtue of~\eqref{eq:13}, verifying the lower
  inequality of~\eqref{eq:normequiv} with $C_1 =1/\sqrt{ 1+c\,
    \veps}.$ 
\end{proof}

\begin{remark}
  Although we presented the above result only for the Helmholtz
  equation, the ideas apply more generally.  It seems possible to
  prove a similar result abstractly, e.g., using the abstract setting
  in~\cite{Bui-TDemkoGhatt11}, for any DPG application that uses a
  scaled graph norm analogous to~\eqref{eq:Vnorm} (with the wave
  operator $A_h$ replaced by suitable others).
\end{remark}

\subsection{Discussion}

Theorem~\ref{thm:eps} shows that the use of the $\veps$-scaling in the
test norm can ameliorate some stability problems, e.g., those that can
arise from large $C(\omega)$.

Observe that the best possible value for the constant $C_2/C_1$
in~\eqref{eq:ee} is $1$. Indeed, if $C_2/C_1$ equals $1$, then the
computed solution~$\UUh$ coincides with the best approximation to
$\UU$ from $U_h$. Theorem~\ref{thm:eps} shows that the quasioptimality
constant of the DPG method approaches the ideal value of $1$ as
$\veps \to 0$. 

However, since the norms depend on $\veps$, we must
further examine the components of the error separately, by defining
\begin{subequations}
  \label{eq:e-he}
\begin{align}
  \eint^2
  & = \| \vu - \vuh \|^2  + \| \phi - \phi_h \|^2,
  \\ \label{eq:6}
  \he^2
  & = \| A E ( \hu - \hu_h,\hphi - \hphi_h) \|^2.
\end{align}
\end{subequations}
The estimate of Theorem~\ref{thm:eps} implies that 
\begin{equation}
  \label{eq:9}
e^2 + \frac{\he^2}{\veps^2} 
\le
\left(1 + c \,\veps \right)^2
\left(a^2 + \frac{\ha^2}{\veps^2} 
\right)
\end{equation}
where $a$ and $\ha$ are the best approximation errors defined by 
\begin{align}
\label{eq:18}
  a^2 & = \inf_{(\vw,\psi,0,0)\in U_h} \| \vu - \vw \|^2 + \| \phi - \psi \|^2, \\
\nonumber
  \ha^2 & = \inf_{(0,0,\hw,\hpsi)\in U_h} \| AE(\hu - \hw, \hphi - \hpsi)\|^2.
\end{align}
Note that $E$ is independent of $\veps$.

We want to compare the error bounds for the numerical fluxes and
traces in the $\veps=1$ case with the case of $0<\veps \ll 1$. To
distinguish these cases we will denote the error defined
in~\eqref{eq:6} by $\he_1$ when $\veps=1$.  Clearly,~\eqref{eq:9}
implies
\begin{align}
  \label{eq:3}
  \he_1^2
  & \le 
  \left(1+c\right)^2
  \left(
    a^2 + \ha^2
  \right).
\end{align}
For the other case,~\eqref{eq:9} implies, after multiplying through by
$\veps^2$,
\[
\he^2 \le  \left(1+c\,\veps\right)^2\left( \veps^2a^2 + \ha^2 \right)\rev{.}
\]
Comparing this with~\eqref{eq:3}, and noting that $a$ and $\ha$ remain
the same for different $\veps$, we find that the DPG errors for fluxes
and traces admit a {\em better bound for smaller $\veps$.}  Whether
the actually observed numerical error improves, will be investigated
through the dispersion analysis presented in a later section, as well
as in the next subsection.

\subsection{Numerical illustration}

Theorem~\ref{thm:eps} partially explains a numerical observation we
now report. We implemented the $\veps$-DPG method by setting the
parameter $r=3$ (see \S~\ref{ssec:inexact}) and computed
$\UUh^r=(\vuh^r,\phi_h^r,\hu_h^r,\hphi_h^r)$.  In analogy
with~\eqref{eq:e-he}, define the discretization errors $\eint_r$ and
$\he_r$ by $ \eint^2_r= \| \vu - \vuh^r \|^2 + \| \phi - \phi_h^r \|^2
$ and $ \he^2_r= \| A E ( \hu - \hu_h^r,\hphi - \hphi_h^r) \|^2.  $
Although Theorem \ref{thm:eps} suggests an investigation of
\[
 \frac{\| \UU - \UUh^r \|_U}{\inf_{\WW \in U_h} \| \UU - \WW \|_U}
=\left ( \frac{e^2_r + (\he_r/\veps)^2 }{a^2 + (\ha/\veps)^2}\right )^{1/2}, 
\]
due to the difficulty of applying the extension operator $E$ in
practice, we have investigated the ratio $\eint_r/a$ as a function of
$\og$. Recall that $a$ is the $L^2(\om)$ best approximation error
defined in~\eqref{eq:18}, so $\eint_r/a$ measures how close the
discretization errors are to the best possible.

\begin{figure}
  \begin{center}
    \includegraphics[width=0.5\textwidth]{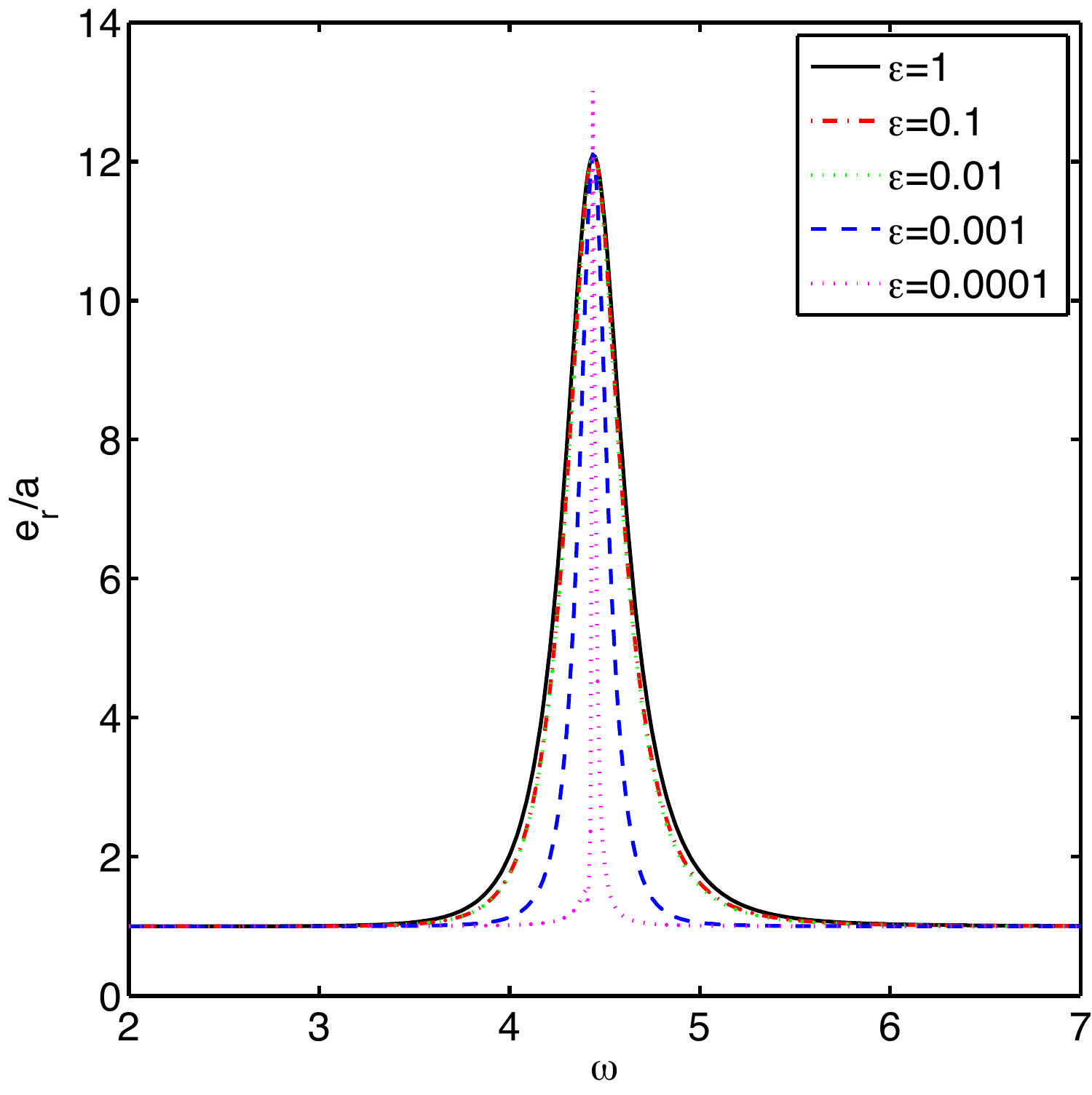}
  \end{center}
  \caption{The regularizing effect of $\veps$-DPG method as seen  from 
    a plot of  the ratio $e_r/a$ near a resonance.}
  \label{fig:ratio}
\end{figure}

For a range of wavenumbers $\omega$, we chose the data $\FF=(\vec 0,
f)$ so that the exact solution to \eqref{eq:bvp} on the unit square
would be $(\vu,\phi)=(-\frac{i}{\omega}\grad\phi,\phi)$, with
$\phi=x(1-x)y(1-y)$. Each resulting boundary value problem was then
solved using the $\veps$-DPG method with $\veps=10^{-n}, n=0,1,2,3,4$,
on a fixed mesh of $h=1/16$ and the corresponding discretization
errors $\eint_r$ were collected.

The resulting ratios $\eint_r/a$ are plotted as a function of $\og$ in
Figure~\ref{fig:ratio} for a few $\veps$ values.  First of all,
observe that the graph of the ratio begins close to the optimal value
of one for all $\veps$ values in the figure.  Next, observe that the
ratio spikes up as $\og$ approaches the exact resonance value $\og =
\pi \sqrt 2 \approx 4.44$, where $C(\og)$ is infinity.  It is
interesting to look at the points near (but not at) the
resonance. Observe that as $\veps$ is decreased, the DPG method
exhibits a ``regularizing'' effect at points near the resonance: E.g.,
at $\og=5$, the values of $\eint_r/a$ are closer to $1$ for smaller
$\veps$. It therefore seems advantageous to use smaller $\veps$ for
problems near resonance.

The theoretical explanation for this numerical observation would be
complete (by virtue of Theorem~\ref{thm:eps}), if we had computed
using the exact DPG test spaces ($r=\infty$), instead of the inexactly
computed spaces ($r=3$).  Certain discrete effects arising due to this
inexact computation of test spaces will be presented in a later
section. 

\section{Lowest order stencil}  
\label{sec:stencil}

We now consider the example of square two-dimensional elements.  The
lowest order case of the DPG method is obtained using $Q(\d K) = \{
(\hw,\hpsi) :$ $\hw$ is constant on each edge of $\d K$, $\hpsi$ is
linear on each edge of $\d K$, and $\hpsi$ is continuous on $\d K\}$.
Let $S(K) =\{ (\vw, \psi):\; \vw$ and $\psi$ are constants (vector and
scalar, resp.) functions on $K\}$.  We consider the DPG method (with
$\veps$) using the lowest order global trial space 
\[
U_h = S_h \times Q_h,
\]
where $ Q_h = \{ \hrr \in Q: \hrr|_{\d K} \in Q(\d K)$ for all mesh
elements $K \}$ and $S_h = \{ \WW : \WW|_K \in S(K) $ for all mesh
elements $K\}$.

Let $\hat\chi_e$ denote the indicator function of an edge $e$. If $a$
denotes a vertex of the square element $K$, let $\phi_a$ denote the
bilinear function that equals one at $a$ and equals zero at the other
three vertices of $K$. Let $\hphi_a = \phi_a|_{\d K}$. The collection
of eight functions of the form $(0,\hphi_a)$ and $(\hat\chi_e,0)$, one
for each vertex, and one for each edge of $K$, forms a basis for~$Q(\d
K)$.  We distinguish between the horizontal and vertical edges,
because the unknowns there approximate different components of the
velocity~$\vu$. Accordingly, we will denote by $\hat\chi_e^h$ the
indicator function of a horizontal edge and by $\hat\chi_e^v$ the
indicator function of a vertical edge.

The local $11 \times 11$ element DPG matrix is defined using a basis
for $Q(\d K)$ and $S(K)$. (While a basis for $Q(\d K)$ is obtained as
mentioned above, a basis for $S(K)$ is trivially obtained by three
indicator functions.) If we enumerate the 11 basis functions as $e_i$,
$i=1,\ldots 11$, then the element DPG matrix is defined by 
\begin{equation}
  \label{eq:B}
  B_{ij} = b( e_j, T^r e_i )
\end{equation}
where $T^r$ is as defined in~\eqref{eq:Tr}. Since this matrix depends
on $\og$ and $\veps$, we will write $B \equiv B(\og, \veps)$.  In our
computations, we do not use any specialized basis for $V^r$ to compute
the action of $T^r$, so to overcome round-off problems due to
ill-conditioned local matrices, we resorted to high precision
arithmetic for these local computations.

To show how $B$ can be computed by mapping, let $\breve K=
[0,1]^2$. For any square $K$ of side length $h$, there is a
translation vector $\vb_K$ such that the $K -\vb_K = h \breve K.$ For
any (scalar or vector) function $\VV$ on $K$, let $\breve \VV$ on
$\breve K$ be the mapped function obtained by $\breve \VV (\breve x) =
\VV( h\breve x + \vb_K)$. Let us denote the matrix computed
using~\eqref{eq:B}, but using the mapped basis functions $\breve e_i$
on $\breve K$, by $\breve B(\og,\veps)$. Then by a change of
variables, it is easy to see that
\begin{equation}
  \label{eq:scaleB}
  B(\og,\veps) = h^2 \breve B (\og h, \veps h).
\end{equation}
Thus we may compute local DPG matrix by scaling the DPG matrix on the
fixed reference element $\breve K$ obtained using the {\em normalized
  wavenumber} $\og h$ and scaling parameter $\veps h$.  It is enough
to compute the element matrix $\breve B$ using high precision
arithmetic for the ensuing dispersion analysis.

Next, we eliminate the three interior variables of $S(K)$ and consider
the {\em condensed} $8\times 8$ element stiffness matrix for the
variables in $Q(\d K)$. At this stage it will be useful to classify
these eight variables (unknowns) into three categories:
\begin{inparaenum}
\item Unknowns at vertices $a$ 
  (which are the coefficients multiplying the basis function 
  $\hphi_a$) 
  denoted by ``\bullt'', 
\item unknowns on  horizontal edges (coefficients multiplying
  $\hat\chi_e^h$) denoted by ``\;\uparr\;'', and 
\item unknowns on vertical edges (coefficients
  multiplying the corresponding $\hat\chi_e^v$) denoted by ``\rtarr''.
\end{inparaenum}
The normal vectors on all horizontal and vertical
edges are fixed to be $(0,1)$ and $(1,0)$, respectively,
corresponding to the direction of the above-indicated arrows.

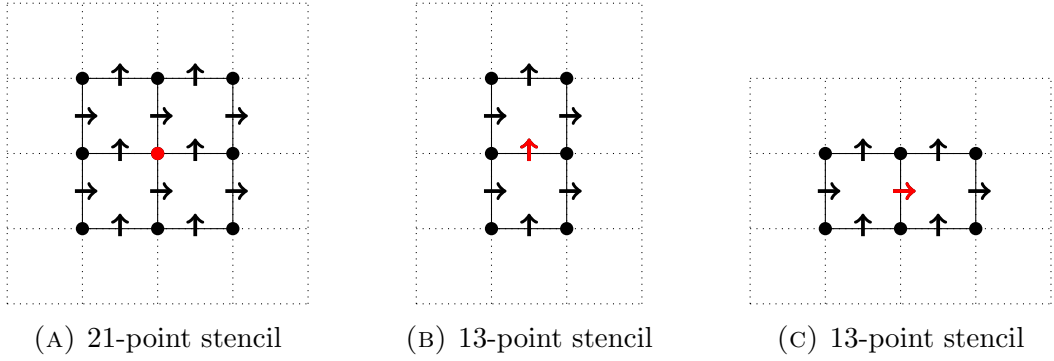
\begin{figure}
  \centering
  \begin{subfigure}[b]{0.3\textwidth}
    \centering
    \begin{tikzpicture}
      \draw [step = 1cm,dotted] (0,0) grid (4,4);
      \draw [step = 1cm] (1,1) grid (3,3);
      \foreach \x in {1,2,3}  {
        \fill (\x,1) circle (2.5pt);
        \fill (\x,2) circle (2.5pt);
        \fill (\x,3) circle (2.5pt);      
      }
      \foreach \x in {1,2,3}  {
        \draw[->,line width=0.05cm] (\x-0.1,1.5)--(\x+0.2,1.5);
        \draw[->,line width=0.05cm] (\x-0.1,2.5)--(\x+0.2,2.5);
      }
      \foreach \y in {1,2,3}  {
        \draw[->,line width=0.05cm] (1.5,\y-0.1)--(1.5,\y+0.2);
        \draw[->,line width=0.05cm] (2.5,\y-0.1)--(2.5,\y+0.2);
      }
      \fill[red] (2,2) circle (2.5pt);
    \end{tikzpicture}
    \caption{21-point stencil}
    \label{fig:stencils21}
  \end{subfigure}
  \begin{subfigure}[b]{0.3\textwidth}
    \centering
    \begin{tikzpicture}
      \draw [step = 1cm,dotted] (0,0) grid (3,4);
      \draw [step = 1cm] (1,1) grid (2,3);
      \foreach \x in {1,2}  {
        \fill (\x,1) circle (2.5pt);
        \fill (\x,2) circle (2.5pt);
        \fill (\x,3) circle (2.5pt);      
      }
      \foreach \x in {1,2}  {
        \draw[->,line width=0.05cm] (\x-0.1,1.5)--(\x+0.2,1.5);
        \draw[->,line width=0.05cm] (\x-0.1,2.5)--(\x+0.2,2.5);
      }
      \foreach \y in {1,2,3}  {
        \draw[->,line width=0.05cm] (1.5,\y-0.1)--(1.5,\y+0.2);
      }
      \draw[->,red,line width=0.05cm] (1.5,2-0.1)--(1.5,2+0.2);
    \end{tikzpicture}
    \caption{13-point stencil}
    \label{fig:stencils13v}
  \end{subfigure}
  \begin{subfigure}[b]{0.3\textwidth}
    \centering
    \begin{tikzpicture}
      \draw [step = 1cm,dotted] (0,0) grid (4,3);
      \draw [step = 1cm] (1,1) grid (3,2);
      \foreach \x in {1,2,3}  {
        \fill (\x,1) circle (2.5pt);
        \fill (\x,2) circle (2.5pt);
      }
      \foreach \x in {1,2,3}  {
        \draw[->,line width=0.05cm] (\x-0.1,1.5)--(\x+0.2,1.5);
      }
      \draw[->,red,line width=0.05cm] (2-0.1,1.5)--(2+0.2,1.5);
      \foreach \y in {1,2}  {
        \draw[->,line width=0.05cm] (1.5,\y-0.1)--(1.5,\y+0.2);
        \draw[->,line width=0.05cm] (2.5,\y-0.1)--(2.5,\y+0.2);
      }
    \end{tikzpicture}
    \caption{13-point stencil}
    \label{fig:stencils13h}
  \end{subfigure}
  \caption{Stencils}
  \label{fig:stencils}
\end{figure}

Now suppose the mesh is a uniform mesh of congruent square
elements. Assembling the above-described condensed $8\times 8$ element
matrices on such a mesh, we obtain a global system where the interior
variables are all condensed out.  The resulting equations can be
represented using the stencils in Figure~\ref{fig:stencils}. A row of
the matrix system corresponding to an unknown of the type ``\bullt'',
connects to unknowns of the same type at other vertices, as well as
unknowns of the other two types, as shown in the 21-point stencil in
Figure~\ref{fig:stencils21}.  Similarly, the unknowns of the type
``\;\uparr\;'' and ``\rtarr'' connect to other unknowns in the
13-point stencils depicted in Figures~\ref{fig:stencils13v}
and~\ref{fig:stencils13h}, respectively. These stencils will form the
basis of our dispersion analysis next.

\section{Dispersion analysis}

This section is devoted to a numerical study of the DPG method with
$\veps$, by means of a dispersion analysis.  The dispersion analysis
is motivated by~\cite{DeraeBabusBouil99}. Details on dispersion
analyses applied to standard methods can be found in~\cite{Ainsw04}
and the extensive bibliography presented therein.

\subsection{The approach}

To briefly adapt the approach of~\cite{DeraeBabusBouil99} to fit our
context, we consider a general method for the homogeneous Helmholtz
equation on an infinite uniform lattice $(h\ZZZ)^2$.  Suppose the method
has $S$ different types of nodes on this lattice, some falling in
between the lattice points, each corresponding to a different type of
variable, with its own stencil (and hence its own equation).  All
nodes of the $t^\mathrm{th}$ type $(t=1,2,\ldots, S)$ are assumed to
be of the form $\vj h$ where $\vj$ lies in an infinite subset of
$(\ZZZ/2)^2$.  The solution value at a general node $\vj h$ of the
$t^\mathrm{th}$ type is denoted by $\pjt$. Note that methods with
multiple solution components are accommodated using the above
mentioned node types.

The $t^\mathrm{th}$ stencil, centered around $\vj h$, consists of a
finite number of nodes, some of which belong to the $t^\mathrm{th}$
stencil, and the remaining belong to other stencils. Suppose we have
finite index sets $J_s \subset (\ZZZ/2)^2$, for each $s =1,2,\ldots,
S$, such that all the nodes of the $t^\mathrm{th}$ stencil centered
around $\vj h$ can be listed as $ N_{\vj,t} = \{ (\vj + \vl) h: \; \vl
\in J_s \text{ and } s=1,2,\ldots, S\}$ with the understanding that
$(\vj+\vl)h$ is a node of $s^\mathrm{th}$ type whenever $\vl \in J_s$.
This allows interaction between variables of multiple types.  Every
node $(\vj+\vl)h$ in $N_{\vj,t}$ has a corresponding stencil
coefficient (or weight) denoted by $\Dtsl$. Due to translational
invariance, these weights do not change if we place the stencil at
another center node $\vj' h$, hence the numbers $\Dtsl$ do not depend
on the center index~$\vj$.

We obtain the method's equation at a general node $\vj h$ of the
$t^\mathrm{th}$ type by applying the $t^\mathrm{th}$ stencil centered
around $\vj h$ to the solution values $\{\pjt\}$, namely
\begin{equation}
  \label{eq:stencilr}
  \sum_{s=1}^S \sum_{l  \in J_s}
  \Dtsl\,  \pjsl =0.
\end{equation}
Note that we have set all sources to zero to get a zero right hand
side in~\eqref{eq:stencilr}.

Plane waves, $\psi(\vx) \equiv e^{\ii \vk \cdot \vx}$, are exact
solutions of the Helmholtz equation with zero sources (and are often
used to represent other solutions). Here the wave vector $\vk$ is of
the form $ \vk = \omega (\cos(\theta), \sin(\theta) ) $ for some $0\le
\theta <2 \pi$ representing the direction of propagation.  The
objective of dispersion analysis is to find similar solutions of the
discrete homogeneous system.  Accordingly, we set
in~\eqref{eq:stencilr}, the ansatz
\begin{equation}
  \label{eq:8}
  \pjt = a_t e^{\ii \vk_h \cdot \vj h},
\end{equation}
where $\vk_h = \omega_h (\cos(\theta),\sin(\theta))$ and $a_t$ is an
arbitrary complex number associated to the $t^\mathrm{th}$ variable
type. We want to find such discrete wavenumbers $\og_h$
satisfying~\eqref{eq:stencilr}.

To this end, we must solve~\eqref{eq:stencilr} after
substituting~\eqref{eq:8} therein, namely
\begin{equation}
  \label{eq:14}
  \sum_{s=1}^S a_s\sum_{l  \in J_s}
  \Dtsl    e^{\ii \vk_h \cdot (\vj+\vl)h} =0, 
\end{equation}
for all $t = 1,2,\ldots S$.  Multiplying by $e^{-\ii \vk_h \cdot
  \vj h}$, we remove any dependence on $\vj$.  Defining
the $S\times S$ matrix $F \equiv (F_{ts}(\og_h))$ by
\[
F_{ts}(\og_h) = 
\sum_{\vl  \in J_s}
  \Dtsl   e^{\ii (\omega_h (\cos\theta,\sin\theta) \cdot \vl) h},
\]
we observe that solving~\eqref{eq:14} is equivalent to solving
\begin{equation}
  \label{eq:detF}
  \det F(\og_h) = 0.
\end{equation}
This is the nonlinear equation we solve to obtain the discrete
wavenumber~$\og_h$ corresponding to any given $\theta$ and $\omega$.

\subsection{Application to the DPG method}

Next, we apply the above-described framework to the lowest order DPG stencil
discussed in Section~\ref{sec:stencil}. Since there are three
different types of stencils (see Figure~\ref{fig:stencils}), we have
$S=3$. The first type of unknowns, denoted by ``\bullt'', represent
the DPG method's approximation to the value of $\phi$ at the nodes
$\vj h$ for all $\vj \in \ZZZ^2$. The stencil of the first type is the
one shown in Figure~\ref{fig:stencils21}. The unknowns of the second
type represent the method's approximation to the vertical components
of $\vu$ on the midpoints of horizontal edges, i.e., at all points in
$(h \ZZZ + h/2 ) \times h\ZZZ$. These unknowns were previously denoted
by ``\;\uparr\;'' and has the stencil portrayed in
Figure~\ref{fig:stencils13v}. Similarly, the third type of stencil and
unknown are as in Figure~\ref{fig:stencils13h}. To
summarize,~\eqref{eq:8} in the lowest order DPG case, becomes
\begin{align*}
  \psi_{1,\vj} &  = \hphi_h(\vx_{\vj}) = a_1 e^{\ii \vk_h\cdot \vx_{\vj}}
  && \forall \vx_{\vj}  \in (h\ZZZ)^2,
  \\
  \psi_{2,\vj} & = \hu_h(\vx_{\vj}) = a_2 e^{\ii \vk_h\cdot \vx_{\vj}}
  && \forall \vx_{\vj}\in (h \ZZZ + h/2 ) \times h\ZZZ,
  \\
  \psi_{3,\vj} & = \hu_h(\vx_{\vj}) = a_3 e^{\ii \vk_h\cdot \vx_{\vj}}
  && \forall \vx_{\vj}\in  h\ZZZ \times (h \ZZZ + h/2 ).
\end{align*}
The condensed $8
\times 8$ DPG matrices, discussed in Section~\ref{sec:stencil}, can be
used to compute the stencil weights $D_{t,s,\vl}$ in each of the
three cases, which in turn lead to the $3\times 3$ nonlinear
system~\eqref{eq:detF} for any given propagation angle $\theta$.

We numerically solved the nonlinear system for $\og_h$, for various
choices of $\theta$ (propagation angle), $r$ (enrichment degree), $
\veps$ (scaling factor in the $V$-norm), and $h$ (mesh size).  The
first important observation from our computations is that the computed
wavenumbers $\og_h$ are complex numbers. They lie close to $\og$ in
the complex plane. The small but nonzero imaginary parts of $\og_h$
indicate that the DPG method has dissipation errors, in addition to
dispersion errors.  The results are described in more detail below.

\subsection{Dependence on $\theta$}

To understand how dispersion errors vary with propagation angle
$\theta$, we fix the exact wave number~$\og$ appearing in the
Helmholtz equation to $1$ (so the wavelength is $2\pi$) and examine
the computed $\re(\og_h)$ for each $\theta$.

\begin{figure}   
  \centering     
  \begin{subfigure}{0.49\textwidth}
    \begin{center}
      \includegraphics[width=0.95\textwidth]{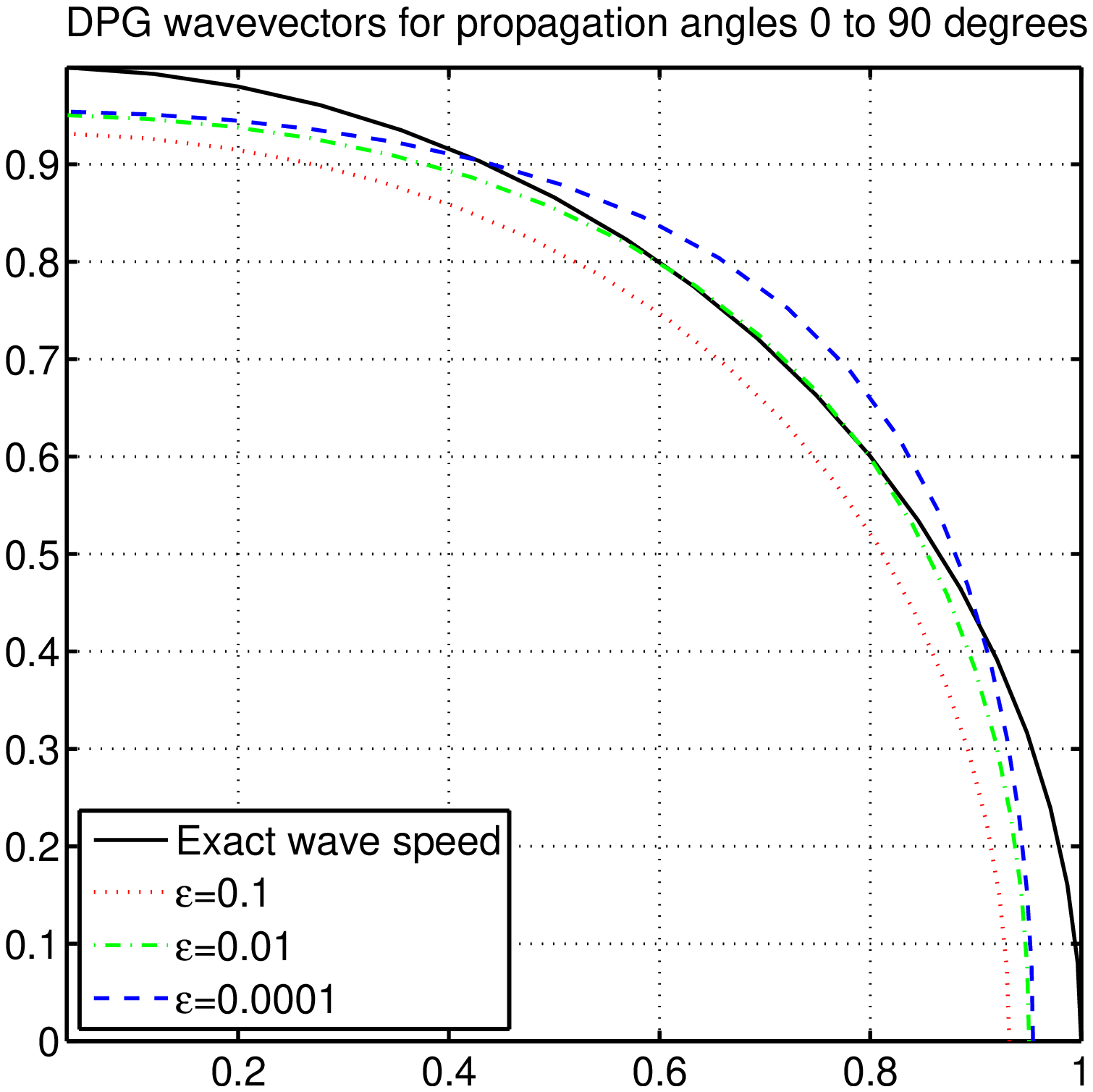}
      \caption{Dependence of $\re(\vk_h)$ on $\veps$}
      \label{fig:wavevec:epsilons}
    \end{center}
  \end{subfigure}
  \begin{subfigure}{0.49\textwidth}
    \begin{center}
      \includegraphics[width=0.95\textwidth]{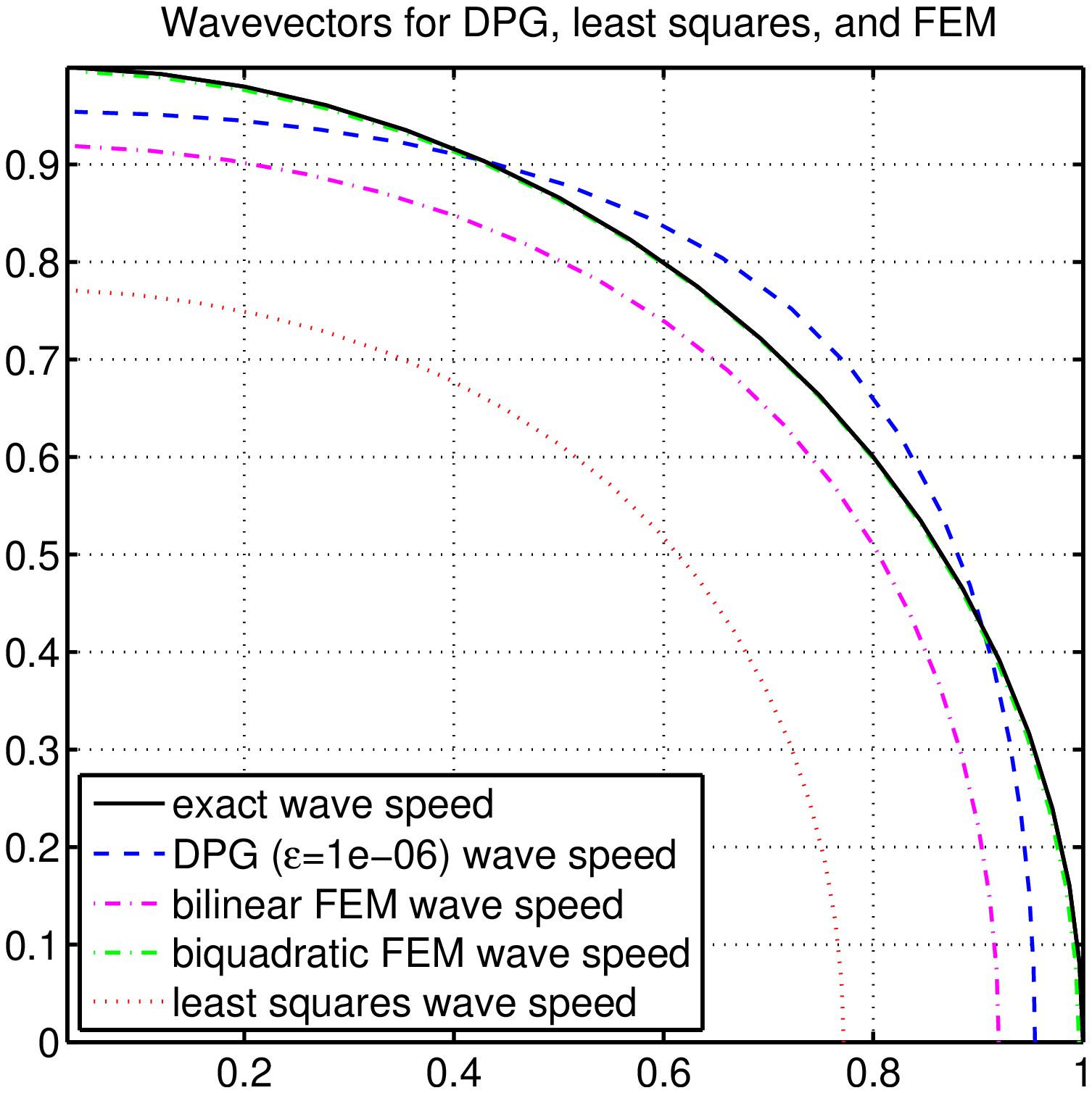}
      \caption{Comparison of methods}
      \label{fig:wavevec:comp}
    \end{center}
  \end{subfigure}
  \caption{The curves traced out by the discrete wavevectors $\vk_h$
    as $\theta$ goes from $0$ to $\pi/2$. These plots were obtained
    using $\omega=1$ and $h=2\pi/4$.}
  \label{fig:wavevec}
\end{figure}

One way to visualize the results is through a plot of the
corresponding discrete wavevectors $\re(\vk_h)$ vs.~$\vk$ for every
propagation direction~$\theta$. Due to symmetry, we only need to
examine this plot in the region $0\le \theta \le \pi/2$.  We present
these plots for the case $r=3$ in Figure~\ref{fig:wavevec}.  We fix
$h=2\pi/4$. (This corresponds to four elements per wavelength if the
propagation direction is aligned with a coordinate axis.)  In
Figure~\ref{fig:wavevec:epsilons}, we plot the curve traced out by the
endpoints of the discrete wavevectors $\vk_h$.  We see that as $\veps$
decreases, the curve gets closer to the (solid) circle traced out by
the exact wavevector~$\vk$.  This indicates better control of
dispersive errors with decreasing $\veps$ (cf.~Theorem~\ref{thm:eps}).

In Figure~\ref{fig:wavevec:comp}, we compare the $\vk_h$ obtained
using the lowest order DPG method with the discrete wavenumbers of the
standard lowest order (bilinear) finite element method (FEM).  Clearly
the wavespeeds obtained from the DPG method are closer to the exact
$\og=1$ than those obtained by bilinear FEM. However, since the lowest
order DPG method has a larger stencil than bilinear FEM, one may argue
that a better comparison is with methods having the same stencil
size. We therefore compare the DPG method with two other methods which
have exactly the same number of points in their stencil: (i)~The
biquadratic FEM, which after condensation has three stencils of the
same size as the lowest order DPG method, and (ii)~the conforming
first order $L^2(\om)$ least-squares method using the lowest order
Raviart-Thomas and Lagrange spaces (which has no interior nodes to
condense out). While the wavespeeds from the DPG method did not
compare favorably with the biquadratic FEM of~(i), we found that the
DPG method performs better than the least-squares method in~(ii).

\subsection{Dependence on $\veps$ and $r$}

\begin{figure}     
  \begin{center}   
    \begin{subfigure}{\textwidth}
    \begin{center}
      \includegraphics[width=\textwidth,clip=true,trim=4cm 0 4cm 0]
      {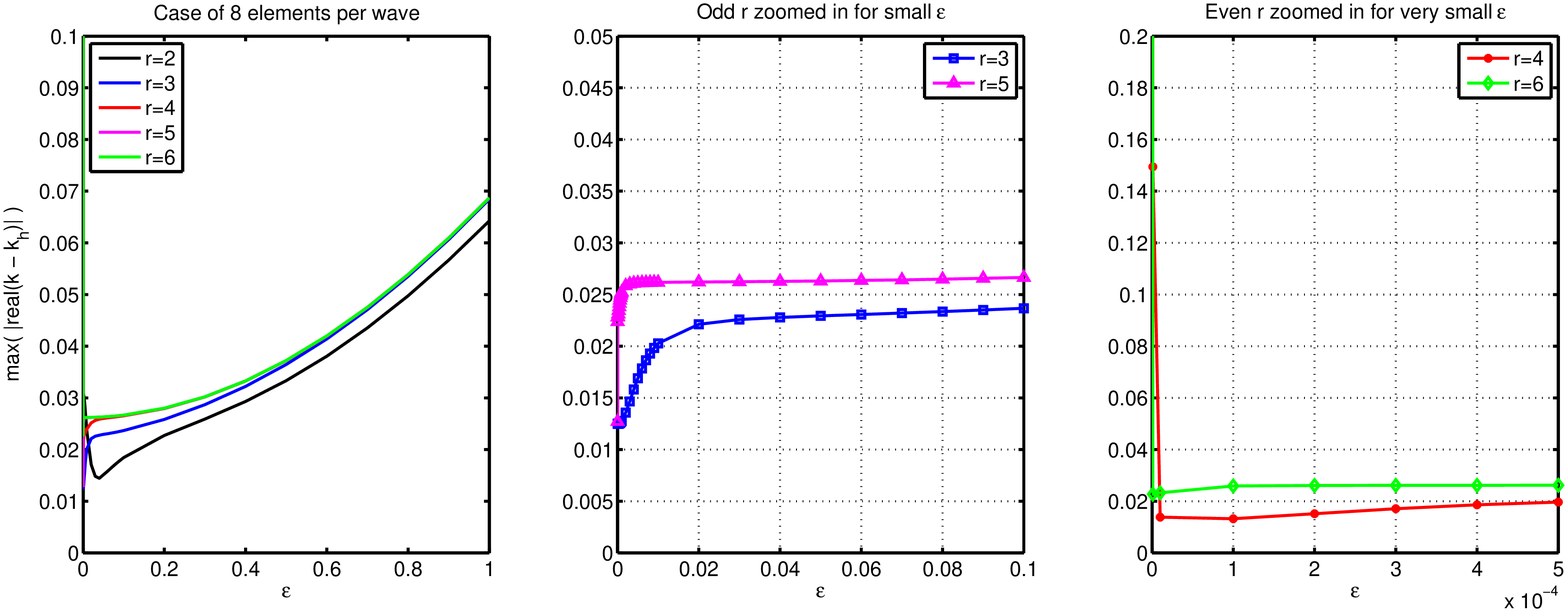}
    \end{center}
    \caption{Dispersive errors: Plots of $\rho$ vs.~$\veps$}
    \label{fig:disperive8}
  \end{subfigure}
  \begin{subfigure}{\textwidth}
    \begin{center}
      \includegraphics[width=\textwidth,clip=true,trim=4cm 0 4cm 0]
      {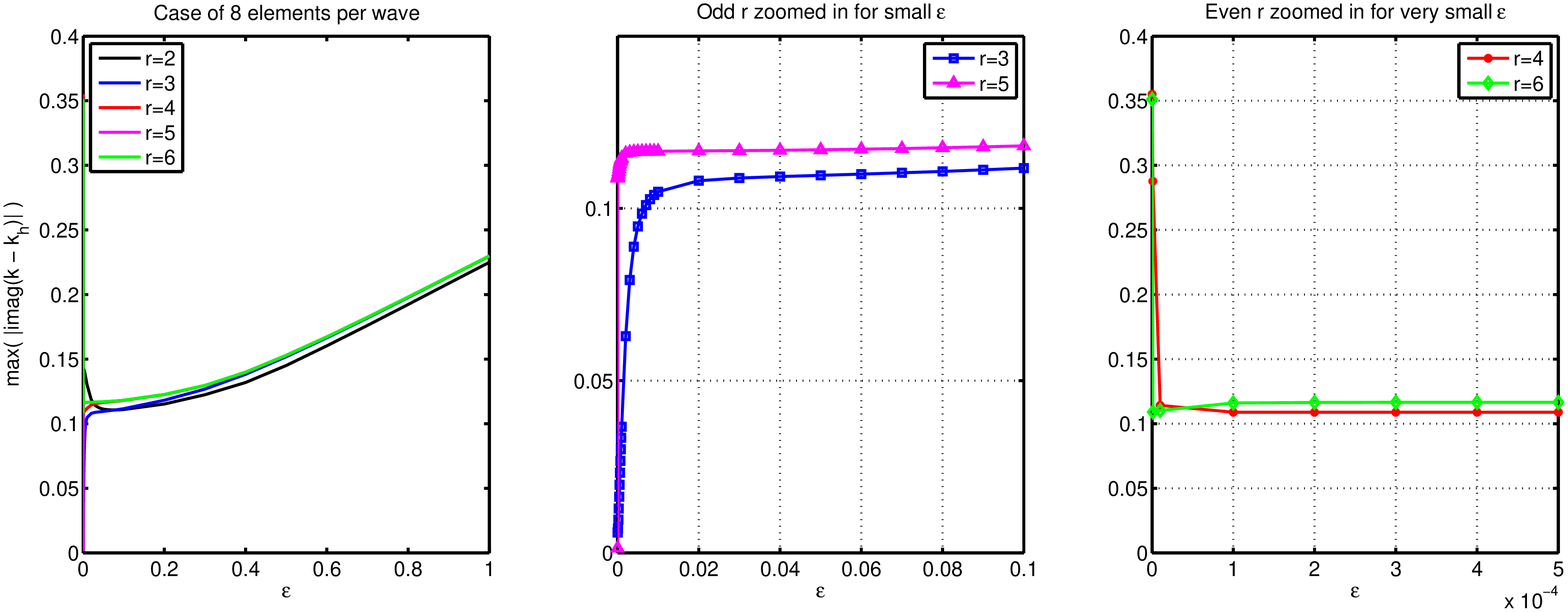}
    \end{center}
    \caption{Dissipative errors: Plots of $\eta$ vs.~$\veps$}
    \label{fig:dissipative8}
  \end{subfigure}
\end{center}
\caption{The discrepancies between exact and discrete wavenumbers as a 
  function of $\veps$, when $\og=1$ and $h=2\pi/8$.}
\label{fig:8elts}
\end{figure}

We have seen in Figure~\ref{fig:wavevec} that the discrete wavespeed
$\og_h$ is a function of the propagation angle $\theta$.  We now
examine the maximum discrepancy between real and imaginary parts of
$\og_h$ and $\og$ over all angles. Accordingly, define
\[
\rho = \max_\theta \left| \re( \og_h(\theta)) - \og \right|,
\qquad
\eta = \max_\theta \left| \im( \og_h(\theta))  \right|.
\]
The former indicates dispersive errors while the latter indicates
dissipative errors. Fixing $\og=1$ and $h =2\pi/8$ (so that there are
about eight elements per wavelength), we examine these quantities as a
function of $r$ and $\veps$ in Figure~\ref{fig:8elts}. The first of
the plots in Figures~\ref{fig:disperive8} and~\ref{fig:dissipative8}
show that the errors decrease as $\veps$ decreases from 1 to about
0.1. In view of Theorem~\ref{thm:eps}, we expected this decrease. 

However, the behavior of the method for smaller $\veps$ is curious. In
the remaining plots of Figure~\ref{fig:8elts} we see that when $r$ is
odd, the errors continue to decrease for smaller $\veps$, while for
even $r$, the errors start to increase as $\veps \to 0$. This suggests
the presence of discrete effects due to the inexact computation of
test functions. We do not yet understand it enough to give a
theoretical explanation.

\subsection{Dependence on $\omega$}

\begin{figure}     
  \begin{subfigure}{0.48\textwidth}
    \begin{center}
      \includegraphics[width=\textwidth]
      {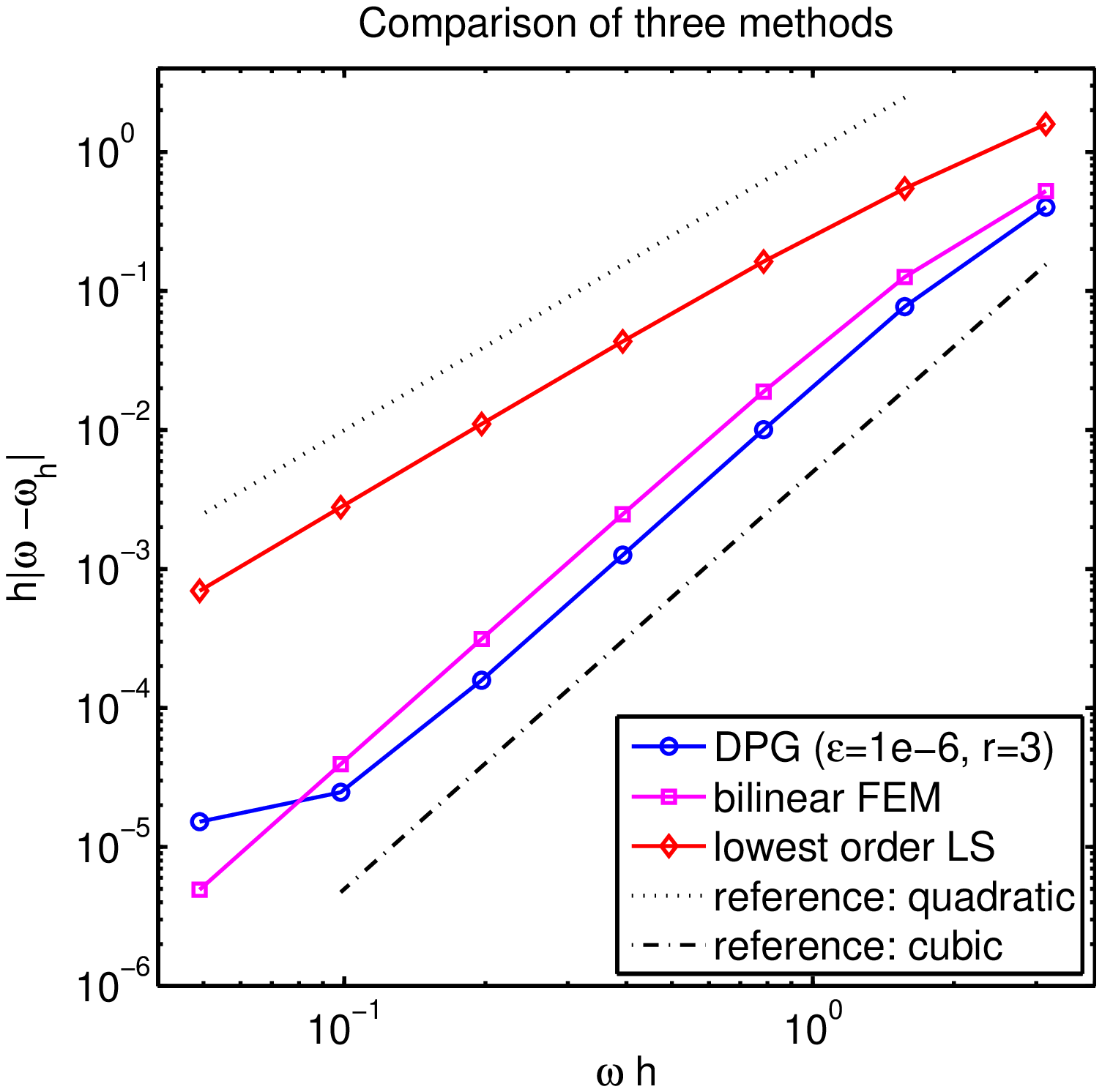}
    \end{center}
    \caption{Plot of $|\og_h h-\og h|$ for three methods}
    \label{fig:cgcecomp}
  \end{subfigure}\quad
  \begin{subfigure}{0.48\textwidth}
    \begin{center}
      \includegraphics[width=\textwidth]
      {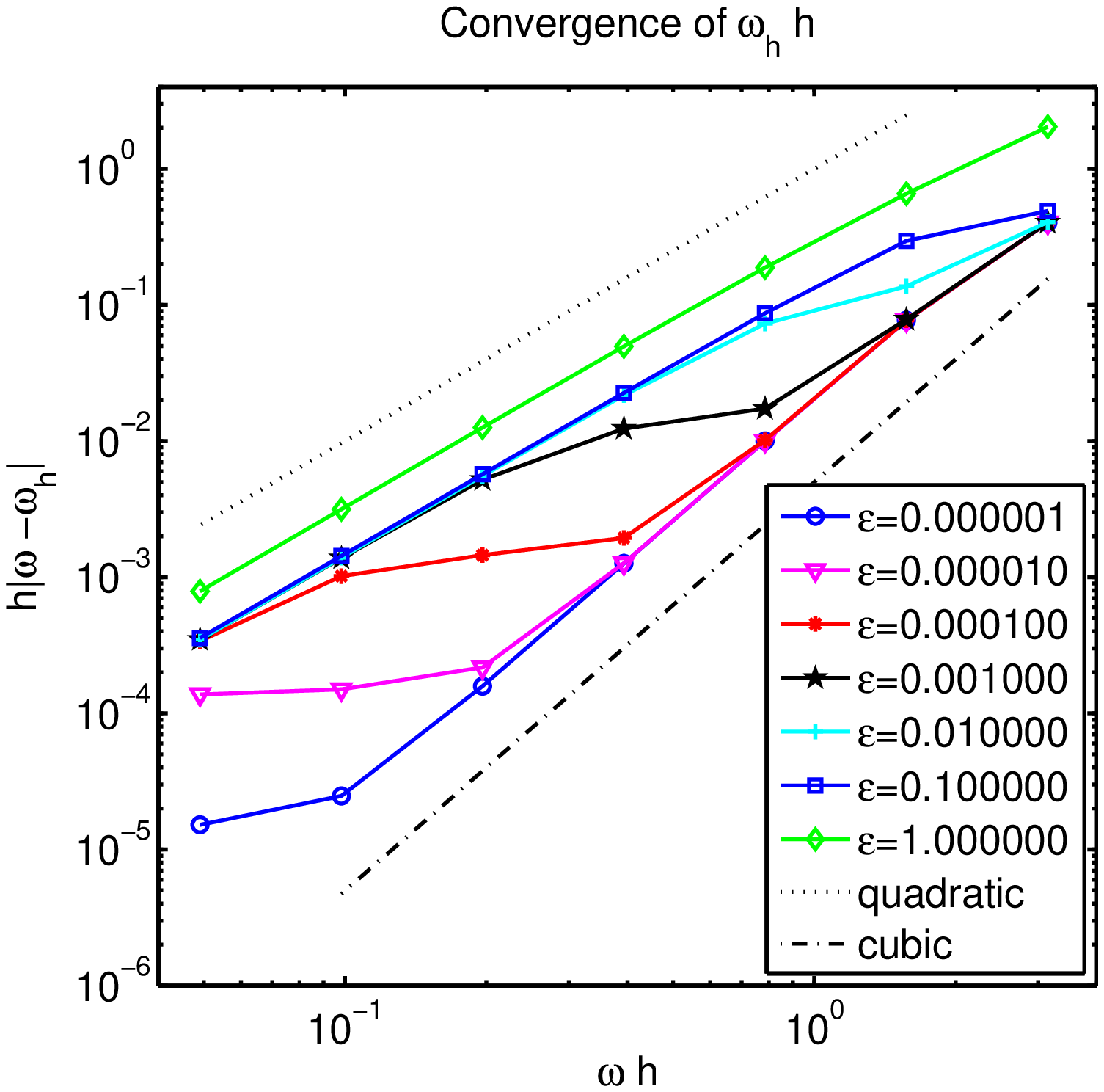}
    \end{center}
    \caption{Case of DPG with $r=3$ and various $\veps$}
    \label{fig:cgceepsl}
  \end{subfigure}
  \caption{Rates of convergence of $|\og_hh-\og h|$ to zero for small $\og h$,
  in the case of propagation angle $\theta=0$.}
\label{fig:cgce}
\end{figure}

Now we examine how $\omega_h$ depends on $\omega$. First, let us note
that the matrix $F$ in~\eqref{eq:detF} only depends on $\og h$. (This
can be seen, for instance, from~\eqref{eq:scaleB} and noting how the
stencil weights depend on the entries of $B$.) Hence, we will study
how $\omega_h h$ depends on the normalized wavenumber $\og h$,
restricting ourselves to the case of $\theta=0$.

In Figure~\ref{fig:cgcecomp}, we plot (in logarithmic scale) the
absolute value of $\og_h h - \og h$ vs.~$\og h$ for the standard
bilinear FEM, the lowest order $L^2$ least-squares method (marked LS),
and the DPG method with $\veps=10^{-6}, r=3$. We observe that while
$|\og_h h - \og h |$ appears to decrease at $O(\og h)^2$ for the least
squares method, it appears to decrease at the higher rate of $O(\og
h)^3$ for the FEM and DPG cases considered in the same graph.  For
easy reference, we have also plotted lines indicating slopes
corresponding to $O(\og h)^2$ and $O(\og h)^3$ decrease, marked
``quadratic'' and ``cubic'', resp., in the same graph. 

Note that a convergence rate of $|\og_h h - \og h | = O(\og h)^3$
implies that the difference between discrete and exact wave speeds
goes to zero at the rate
\[
| \og_ h - \og | = \og \;O ( \og h)^2.
\]
This shows the presence of the so-called~\cite{BabusSaute00} pollution
errors: For instance, as $\og$ increases, even if we use finer meshes
so as to maintain $\og h$ fixed, the error in wave speeds will
continue to grow at the rate of $O(\og)$. Our results show that
pollution errors are present in all the three methods considered in
Figure~\ref{fig:cgcecomp}. The difference in convergence rates, e.g.,
whether $| \og_ h - \og |$ converges to zero at the rate $\og \;O (
\og h)^2$ or at the rate $\og \;O ( \og h),$ becomes important, for
example, when trying to answer the following question: What $h$ should
we use to obtain a fixed error bound for $| \og_h - \og |$ for all
frequencies $\omega$?  While methods with convergence rate $\og O(\og
h)$ would require $h \approx \og^{-2}$, methods with convergence rate
$\og O(\og h)^2$ would only require $h \approx \og^{-3/2}$.

Next, consider Figure~\ref{fig:cgceepsl}, where we observe interesting
differences in convergence rates within the DPG family.  While the DPG
method for $\veps=1$ exhibits the same quadratic rate of convergence
as the least-squares method, we observe that a transition to higher
rates of convergence progressively occur as $\veps$ is decreased by
each order of magnitude. The $\veps=10^{-6}$ case shows a rate
virtually indistinguishable from the cubic rate in the considered
range.  The convergence behavior of the DPG method thus seems to vary ``in
between'' those of the least-squares method and the standard FEM as
$\veps$ is decreased. The values of $\og h$ considered in these plots
are $2\pi/2^l$ for $l=1,2,\ldots, 7$, which cover the numbers of
elements per wavelength in usual practice.

\begin{figure}     
  \begin{center} 
    \begin{subfigure}{0.48\textwidth}
    \begin{center}
      \includegraphics[width=\textwidth]
      {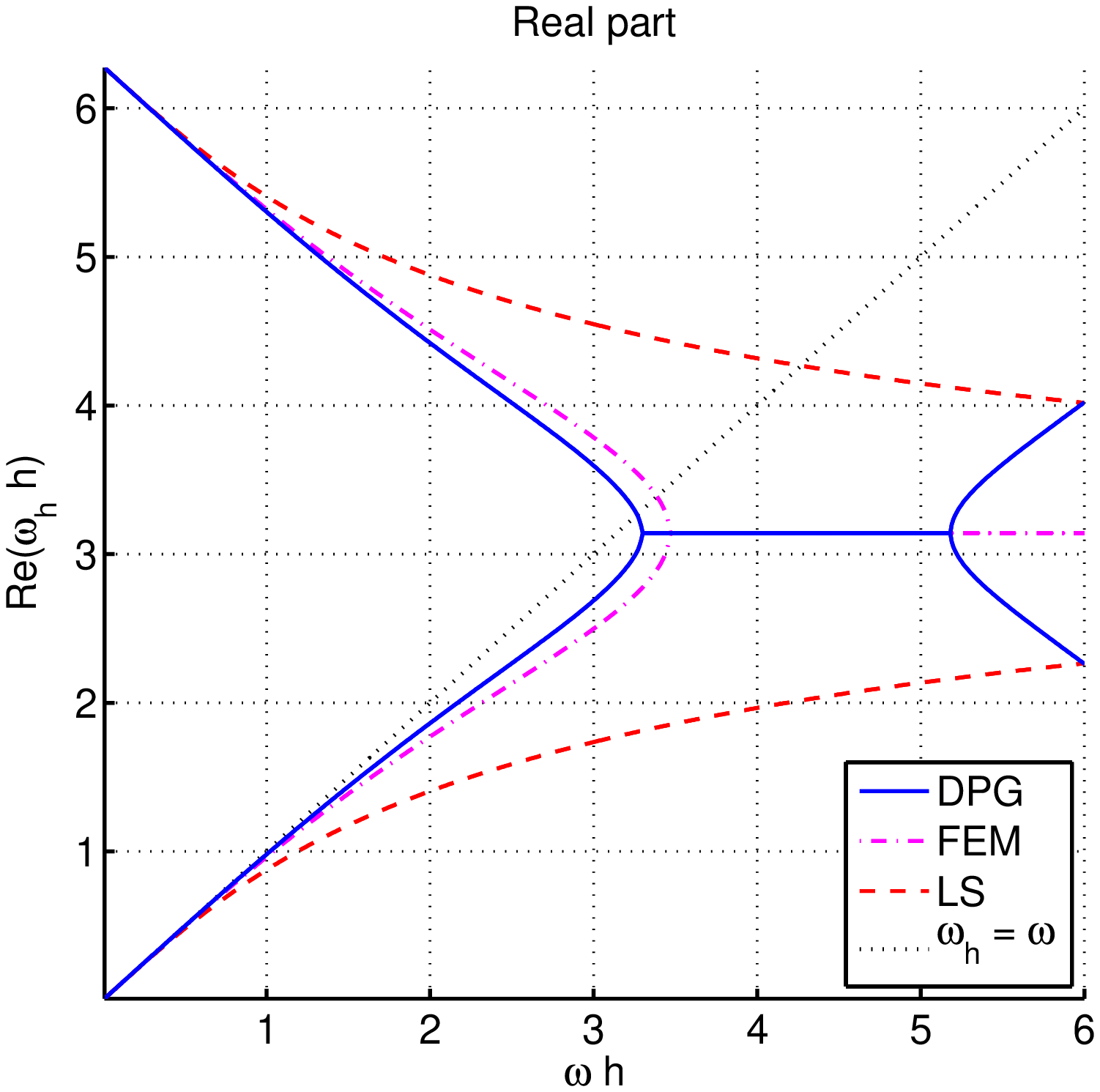}
    \end{center}
    \caption{$\re(\og_h h)$ as a function of $\omega h$}
    \label{fig:realband}
  \end{subfigure}\quad
  \begin{subfigure}{0.48\textwidth}
    \begin{center}
      \includegraphics[width=\textwidth]
      {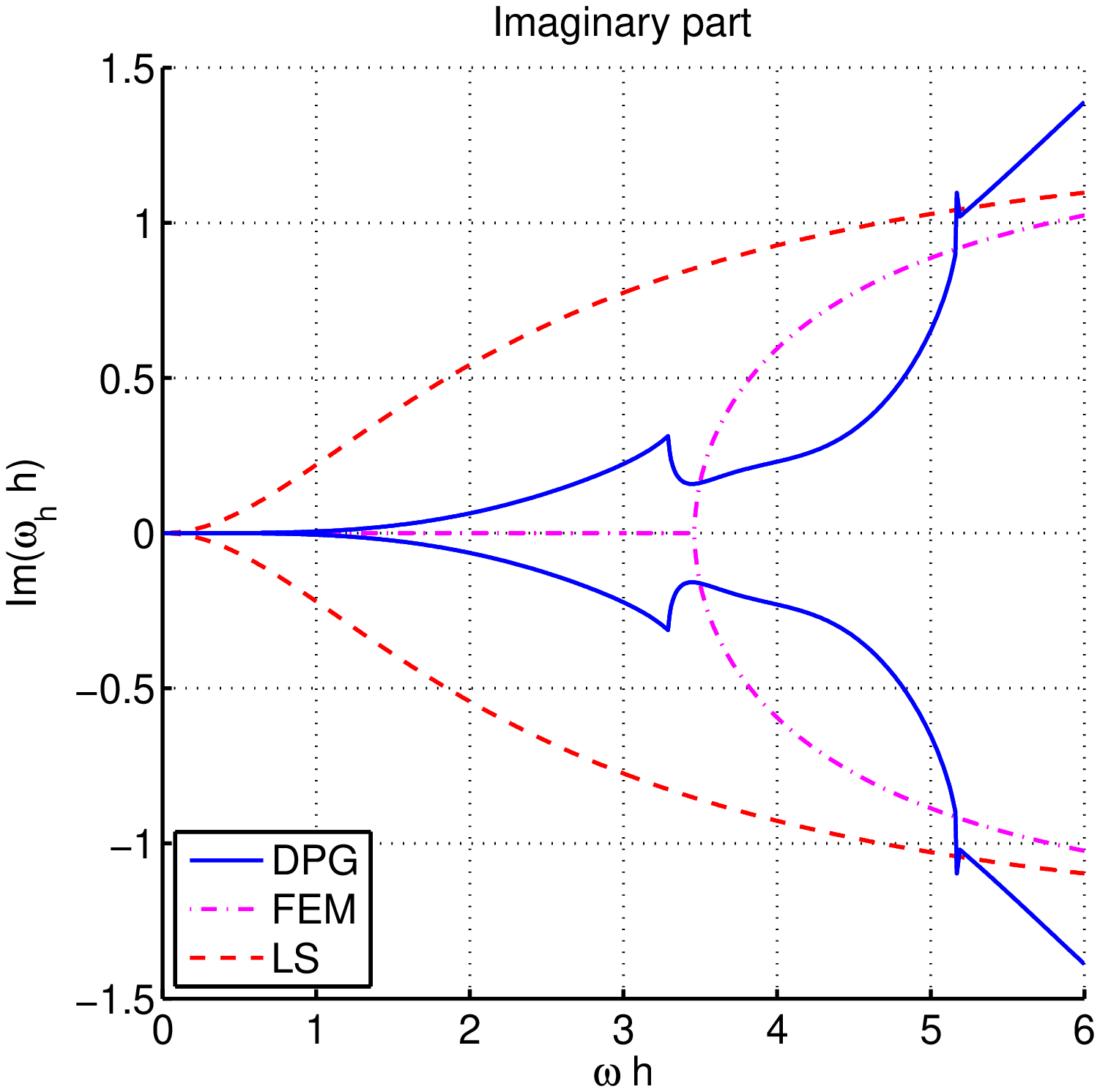}
    \end{center}
    \caption{$\im(\og_h h)$ as a function of $\omega h$}
    \label{fig:imagband}
  \end{subfigure}
\end{center}
\caption{A comparison of discrete wavenumbers obtained by three lowest
  order methods in the case of propagation angle $\theta=0$.}
\label{fig:bands}
\end{figure}

Next, we consider a wider range of $\og h$
following~\cite{ThompPinsk94}, where such a study was done for
standard finite elements, separating the real and imaginary parts of
$\og_h h$. Our results for the case of $\theta=0$ are collected in
Figure~\ref{fig:bands}. To discuss these results, let us first recall
the behavior of the standard bilinear finite element method (whose
discrete wavenumbers are also plotted in dash-dotted curve in
Figure~\ref{fig:bands}).  From its well-known dispersion relation (see
e.g,~\cite{Ainsw04}), we observe that if $\og_h h$ solves the
dispersion relation, then $2\pi -\og_h h$ also solves it. Accordingly, 
the plot in Figure~\ref{fig:realband} is symmetric about the
horizontal line at height $\pi$. Furthermore, as already shown
in~\cite{ThompPinsk94}, $\og_h h$ is real-valued in the range $0<
\omega h < \sqrt{12}$. The threshold value $\og h = \sqrt{12}$ was
called the ``cut-off'' frequency.  (Note that in the regime $\og h >
\pi$, we have less than two elements per wavelength. Note also that
$\sqrt{12} > \pi$.)  As can be seen from Figures~\ref{fig:realband}
and~\ref{fig:imagband}, in the range $\sqrt{12}< \og h\le 6$, the
bilinear finite elements yield $\og_h h$ with a constant real part of
$\pi$ and nonzero imaginary parts of increasing magnitude.

We observed a somewhat similar behavior for the DPG method -- see the
solid curves of Figure~\ref{fig:bands}, which were obtained after
calculating $F$ explicitly using the computer algebra package Maple,
for the lowest order DPG method, setting $r=3$ and $\veps=0$. The major
difference between the DPG and FEM results is that $\og_h h$ from the
DPG method was not real-valued even in the regime where FEM
wavenumbers were real. It seems difficult to define any useful
analogue of the cut-off frequency in this situation. Nonetheless, we
observe from Figures~\ref{fig:realband} and~\ref{fig:imagband} that
there is a segment of constant real part of value $\pi$, before which
the imaginary part of $\og_h h$ is relatively small. As the number of
mesh elements per wavelength increases (i.e., as $\og h$ becomes
smaller), the imaginary part of $\og_h h$ becomes small. We therefore
expect the diffusive errors in the DPG method to be small when $\og h$
is small. Finally, we also conclude from Figure~\ref{fig:bands} that
both the dispersive and dissipative errors are better behaved for the
DPG method when compared to the $L^2$ least-squares method.

\section{Conclusions}

We presented and analyzed the $\veps$-DPG method for the Helmholtz
equation.  The case $\veps=1$ was analyzed previously
in~\cite{DemkoGopalMuga11a}.  The numerical results
in~\cite{DemkoGopalMuga11a} showed that in a comparison of the ratio
of $L^2$ norms of the discretization error to the best approximation
error is compared, the DPG method had superior properties. The
pollution errors reported in~\cite{DemkoGopalMuga11a} for a higher
order DPG method were so small that its growth could not be
determined conclusively there.  In this paper, by performing a
dispersion analysis on the DPG method for the lowest possible order,
we found that the method has pollution errors that asymptotically grow with
$\og$ at the same rate as other comparable methods.

In addition, we found both dispersive and dissipative type of errors
in the lowest order DPG method. The dissipative errors manifest in
computed solutions as artificial damping of wave amplitudes (e.g., as
illustrated in Figure~\ref{fig:dissip}).

Our results show that the DPG solutions have higher accuracy than an
$L^2$-based least-squares method with a stencil of identical
size. However, the errors in the (lowest order) DPG method did not
compare favorably with a standard (higher order) finite element method
having a stencil of the same size. Whether this disadvantage can be
offset by the other advantages of the DPG methods (such as the
regularizing effect of $\veps$, and the fact that it yields Hermitian
positive definite linear systems and good gradient approximations)
remains to be investigated.

We provided the first theoretical justification for considering the
$\veps$-modified DPG method. If the test space were exactly computed,
then Theorem~\ref{thm:eps} shows that the errors in numerical fluxes
and traces will improve as $\veps\to 0$. However, if the test space is
inexactly computed using the enrichment degree $r$, then the numerical
results from the dispersion analysis showed that errors continually
decreased as $\veps$ was decreased only for odd  $r$. A full
theoretical explanation of such discrete effects and the limiting
behavior when $\veps$ is $0$ deserves further study.








\begin{thebibliography}{10}

\bibitem{Ainsw04}
M.~Ainsworth.
\newblock Discrete dispersion relation for {$hp$}-version finite element
  approximation at high wave number.
\newblock {\em SIAM J. Numer. Anal.}, 42(2):553--575 (electronic), 2004.

\bibitem{BabusSaute00}
I.~M. Babu{\v{s}}ka and S.~A. Sauter.
\newblock Is the pollution effect of the {FEM} avoidable for the {H}elmholtz
  equation considering high wave numbers?
\newblock {\em SIAM Rev.}, 42(3):451--484 (electronic), 2000.

\bibitem{BocheGunzb09a}
P.~B. Bochev and M.~D. Gunzburger.
\newblock {\em Least-squares finite element methods}, volume 166 of {\em
  Applied Mathematical Sciences}.
\newblock Springer, New York, 2009.


\bibitem{BrambKolevPasci05}
J.~H. Bramble, T.~V. Kolev, and J.~E. Pasciak.
\newblock A least-squares approximation method for the time-harmonic {M}axwell
  equations.
\newblock {\em J. Numer. Math.}, 13(4):237--263, 2005.

\bibitem{BrambLazarPasci97}
J.~H. Bramble, R.~D. Lazarov, and J.~E. Pasciak.
\newblock A least-squares approach based on a discrete minus one inner product
  for first order systems.
\newblock {\em Math. Comp.}, 66(219):935--955, 1997.

\bibitem{Bui-TDemkoGhatt11}
T.~Bui-Thanh, L.~Demkowicz, and O.~Ghattas.
\newblock A unified discontinuous Petrov-Galerkin method and its analysis for
  Friedrichs' systems.
\newblock {\em ICES report}, 2011.

\bibitem{CaiLazarMante94}
Z.~Cai, R.~Lazarov, T.~A. Manteuffel, and S.~F. McCormick.
\newblock First-order system least squares for second-order partial
  differential equations. {I}.
\newblock {\em SIAM J. Numer. Anal.}, 31(6):1785--1799, 1994.


\bibitem{DemkoGopal10a}
L.~Demkowicz and J.~Gopalakrishnan.
\newblock A class of discontinuous {P}etrov-{G}alerkin methods. {Part I:} {T}he
  transport equation.
\newblock {\em {Computer Methods in Applied Mechanics and Engineering}},
  199:1558--1572, 2010.

\bibitem{DemkoGopal11a}
L.~Demkowicz and J.~Gopalakrishnan.
\newblock Analysis of the {DPG} method for the {P}oisson equation.
\newblock {\em SIAM J Numer. Anal.}, 49(5):1788--1809, 2011.

\bibitem{DemkoGopal11}
L.~Demkowicz and J.~Gopalakrishnan, A class of
  discontinuous {P}etrov-{G}alerkin methods. {P}art {II}: Optimal test
  functions, {\em Numerical Methods for Partial Differential Equations}, 27 (2011),
  pp.~70--105.


\bibitem{DemkoGopalNiemi12}
L.~Demkowicz, J.~Gopalakrishnan, and A.~Niemi.
\newblock A class of discontinuous {P}etrov-{G}alerkin methods. {P}art {III}:
  Adaptivity.
\newblock {\em Applied Numerical Mathematics}, 62:396--427, 2012.

\bibitem{DemkoGopalMuga11a}
L.~Demkowicz, J.~Gopalakrishnan, I.~Muga, and J.~Zitelli.
\newblock Wavenumber explicit analysis for a {DPG} method for the
  multidimensional {Helmholtz} equation.
\newblock {\em {Computer Methods in Applied Mechanics and Engineering}},
  213-216:126--138, March 2012.

\bibitem{DeraeBabusBouil99}
A.~Deraemaeker, I.~Babu{\v s}ka, and P.~Bouillard.
\newblock Dispersion and pollution of the {FEM} solution for the Helmholtz
  equation in one, two and three dimensions.
\newblock {\em International Journal for Numerical Methods in Engineering},
  46:471--499, 1999.

\bibitem{FixGunzb80}
G.~J. Fix and M.~D. Gunzburger.
\newblock On numerical methods for acoustic problems.
\newblock {\em Comput. Math. Appl.}, 6(2):265--278, 1980.

\bibitem{GopalQiu12a}
J.~Gopalakrishnan and W.~Qiu.
\newblock An analysis of the practical {DPG} method.
\newblock {\em Math. Comp}, to appear.

\bibitem{Ihlen98}
F.~Ihlenburg.
\newblock {\em Finite element analysis of acoustic scattering}, volume 132 of
  {\em Applied Mathematical Sciences}.
\newblock Springer-Verlag, New York, 1998.

\bibitem{Jiang98}
B.-n. Jiang.
\newblock {\em The least-squares finite element method}.
\newblock Scientific Computation. Springer-Verlag, Berlin, 1998.
\newblock Theory and applications in computational fluid dynamics and
  electromagnetics.

\bibitem{Lee99}
B.~Lee.
\newblock First-order system least-squares for elliptic problems with {R}obin
  boundary conditions.
\newblock {\em SIAM J. Numer. Anal.}, 37(1):70--104 (electronic), 1999.

\bibitem{LeeManteMcCor00}
B.~Lee, T.~A. Manteuffel, S.~F. McCormick, and J.~Ruge.
\newblock First-order system least-squares for the {H}elmholtz equation.
\newblock {\em SIAM J. Sci. Comput.}, 21(5):1927--1949, 2000.


\bibitem{Melen95}
J.~M. Melenk.
\newblock {\em {On Generalized Finite Element Methods}}.
\newblock PhD thesis, University of Maryland, 1995.

\bibitem{ThompPinsk94}
L.~L. Thompson and P.~M. Pinsky.
\newblock Complex wavenumber {Fourier} analysis of the p-version finite element
  method.
\newblock {\em J. Computational Mechanics}, 13(4):255--275, 1994.




\end{thebibliography}
\end{document}